\documentclass[11pt]{article}
\usepackage{amsthm, amsmath, amssymb, amsfonts, url, booktabs, tikz, setspace, fancyhdr, bm}
\usepackage{subcaption}
\usepackage[hidelinks]{hyperref}
\usepackage{geometry}
\geometry{verbose,tmargin=2.1cm,bmargin=2.1cm,lmargin=2.4cm,rmargin=2.4cm}
\usepackage{hyperref, enumerate}
\usepackage[shortlabels]{enumitem}
\usepackage[babel]{microtype}
\usepackage[english]{babel}
\usepackage[capitalise]{cleveref}
\usepackage{comment}
\usepackage{bbm}
\usepackage{csquotes}
\usepackage{mathabx}
%\usepackage{scalerel}
%\usetikzlibrary{arrows.meta}

\counterwithin{figure}{section}

% ------   Theorem Styles -------
\newtheorem{theorem}{Theorem}[section]
\newtheorem{prop}[theorem]{Proposition}

\newtheorem{conj}[theorem]{Conjecture}
\newtheorem{claim}[theorem]{Claim}

\newtheorem{problem}[theorem]{Problem}

\theoremstyle{definition}

\newtheorem{ques}[theorem]{Question}
\newtheorem{strategy}{Strategy}

\theoremstyle{remark}

%%%%%%%%%%%%%%%%%%%

\newlist{Case}{enumerate}{3}
\setlist[Case, 1]{%
    label           =   {\bfseries Case \arabic*.},
    labelindent=1em ,labelwidth=1cm, labelsep*=1em, leftmargin =!
}
\setlist[Case, 2]{%
    label           =   {\bfseries Case \arabic{Casei}.\arabic*.},
    labelindent=-2em ,labelwidth=1cm, labelsep*=1em, leftmargin =!
}
\setlist[Case, 3]{%
    label           =   {\bfseries Case \arabic{Casei}.\arabic{Caseii}.\arabic*.},
    labelindent=-2em ,labelwidth=1cm, labelsep*=1em, leftmargin =!   
}

\newenvironment{poc}{\begin{proof}[Proof of claim]}{\end{proof}}

\usepackage{todonotes}

% project-specific ones

%\newcommand{\fh}{\mathsf{FH}}
%\newcommand{\tw}{\mathrm{tw}}

\title{Strong Ramsey game on two boards }
\author{
Jiangdong Ai\thanks{School of Mathematical Sciences and LPMC, Nankai University, Tianjin, P.R. China. Emails: {\texttt jd@nankai.edu.cn, xinyan@mail.nankai.edu.cn}. Supported by the National Natural Science Foundation of China under grant No.12161141006 and No.12401456.}
\and
Jun Gao\thanks{Extremal Combinatorics and Probability Group (ECOPRO), Institute for Basic Science (IBS), Daejeon, South Korea. Emails: {\texttt \{jungao, zixiangxu\}@ibs.re.kr}. Supported by the Institute for Basic Science (IBS-R029-C4).}
\and
Zixiang Xu\footnotemark[2]
\and
Xin Yan\footnotemark[1] 
}

\begin{document}

\maketitle

\begin{abstract}
The strong Ramsey game $R(\mathcal{B}, H)$ is a two-player game played on a graph $\mathcal{B}$, referred to as the board, with a target graph $H$. In this game, two players, $P_1$ and $P_2$, alternately claim unclaimed edges of $\mathcal{B}$, starting with $P_1$. The goal is to claim a subgraph isomorphic to $H$, with the first player achieving this declared the winner. A fundamental open question, persisting for over three decades, asks whether there exists a graph $H$ such that in the game $R(K_n, H)$, $P_1$ does not have a winning strategy in a bounded number of moves as $n \to \infty$.

In this paper, we shift the focus to the variant $R(K_n \sqcup K_n, H)$, introduced by David, Hartarsky, and Tiba, where the board $K_n \sqcup K_n$ consists of two disjoint copies of $K_n$. {We prove that there exist infinitely many graphs $H$ such that $P_1$ cannot win in $R(K_n \sqcup K_n, H)$ within a bounded number of moves through a concise proof.} This perhaps provides evidence for the existence of examples of the longstanding open problem mentioned above.

\end{abstract}

\section{Introduction}
 The study of positional games began with the pioneering work of Hales and Jewett~\cite{1963HalesJ}, followed by notable contributions from Erd\H{o}s and Selfridge~\cite{1973Erdos}. Beck's prolific research~\cite{1981Beck,1985Beck,1996Beck,2002DMBeck,1982Beck2}, starting in the early 1980s, significantly defined and expanded this topic. Among the many variants~\cite{2009OnlineRamsey,2020COnlonRamseyGame,2015RSA,2023Guo,2014Book,2011JAMS}, strong positional games stand out as the most intuitive and natural.

 The strong Ramsey game $R(\mathcal{B},H)$ is played on a finite or infinite graph $\mathcal{B}$, called the board, with target graph $H$. Two players $P_{1}$ and $P_{2}$, take turns selecting unclaimed edges of $\mathcal{B}$, with $P_1$ making the initial move. The first player to successfully establish an isomorphic copy of $H$ wins and if neither player achieves this within a finite time, the game is declared a draw.

 In the context of fixed graphs, a general strategy-stealing argument establishes that $P_{2}$ cannot have a winning strategy. Moreover, the classical Ramsey theorem~\cite{1930Ramsey} states that for any given graph $H$, there exists a natural number $n_{0}(H)$ such that for any integer $n\ge n_{0}(H)$, any edge-colored clique $K_{n}$ with two colors contains a monochromatic copy of $H$. Consequently, $P_{1}$ is guaranteed to have a winning strategy for sufficiently large $n$. However, the strategy derived from this argument is non-explicit, and very few explicit strategies are known.

While it is known that as $n$ becomes sufficiently large, $P_{1}$ is assured of having a winning strategy, a pertinent query arises: Can $P_{1}$ ensure victory within a fixed number of moves, denoted by $c_{H}$, which solely relies on the target graph $H$? To formally address this, we introduce the extremal function $L(\mathcal{B},H)$, which represents the minimum number of moves required for $P_{1}$ to secure victory, regardless of $P_{2}$'s level of intelligence.

\begin{ques}
    For given graph $H$ and $\mathcal{B}$, determine the extremal function $L(\mathcal{B},H)$ as $n\rightarrow\infty$.
\end{ques}

By the definition of the Tur\'{a}n number of graph $H$,  it follows that for any given graph $H$, we have $L(K_{n},H)\le \textup{ex}(n,H)$, where $\textup{ex}(n,H)$ is the maximum number of edges in an $n$-vertex $H$-free graph.

A major conjecture in this area, proposed by Beck~\cite{2002DMBeck}, is stated as follows. Remarkably, Beck includes this problem among his ``7 most humiliating open problems'' and deems even the case \( t = 5 \) to be seemingly intractable.

\begin{conj}
    Let $n$ be a sufficiently large number, for any $t\ge 3$, 
    \begin{equation}
        L(K_{n},K_{t})\le c_{t},
    \end{equation}
    where $c_{t}$ is a constant which depends only on $t$.
\end{conj}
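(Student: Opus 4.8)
The plan is to force a win by engineering a \emph{multiple threat} (a ``fork''): a position, reached after boundedly many $P_1$-moves, in which $P_1$ owns two distinct near-copies of $K_t$, each a $K_t$ missing a single currently-unclaimed edge, with the two missing edges distinct. Since $P_2$ can claim at most one edge per turn, it cannot destroy both threats, and $P_1$ completes a $K_t$ on the following move. Strategy-stealing already guarantees $P_1$ need never lose, so the only issue is \emph{speed}; the whole argument is therefore an induction on $t$ with a strengthened hypothesis: $P_1$ can, in $f(t)$ moves, not merely build one clique but maintain a large reservoir of $P_1$-owned copies of $K_t$ sharing a common small $P_1$-clique core, together with the freedom to extend them, so that $P_2$'s bounded interference cannot exhaust the reservoir.

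First I would settle the base cases. For $t=3$ the fork is a $P_1$-owned star $K_{1,3}$ with centre $y$ and fresh leaves $x,z,w$ whose three closing edges $xz,xw,zw$ are unclaimed; completing the star creates three simultaneous triangle threats, only one of which $P_2$ can block. Because $n$ is large, $P_1$ can always pick the leaves fresh, so $P_2$ cannot have pre-claimed the closing edges, and the win takes a bounded number of moves. The same idea one level up handles $t=4$: $P_1$ builds a triangle $abc$ together with two further triangles $abd$, $abe$ sharing the edge $ab$, arranging that $cd$ and $ce$ stay unclaimed; this is a double threat for $K_4$, and with $n$ large $P_1$ runs several such candidate forks in parallel on fresh vertices so that $P_2$ cannot block them all.

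The inductive step is the crux. Assuming $P_1$ can produce, in $f(t-1)$ moves, a large family $\mathcal{F}$ of $P_1$-owned copies of $K_{t-1}$ sharing a common $P_1$-clique core $C$, I would upgrade to $K_t$ by finding two extension vertices $u,v$, each joined by $P_1$-edges to a common $Q\in\mathcal{F}$ up to one unclaimed edge, so that the two resulting $K_t$-threats are distinct. To keep enough of $\mathcal{F}$ alive, $P_1$ uses the largeness of $n$ to refresh damaged members and to keep the number of surviving threats growing faster than $P_2$ can prune it, a potential/weight-function bookkeeping in the spirit of the Erd\H{o}s--Selfridge method applied locally to the reservoir. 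One has the a priori ceiling $L(K_n,K_t)\le \ex(n,K_t)$ noted above, which guarantees termination but is far from the constant bound sought.

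The step I expect to be the genuine obstacle---and the reason the conjecture is open even for $t=5$---is controlling $P_2$'s \emph{shared} interference. A single edge claimed by $P_2$ can lie in many of the potential $K_t$'s that $P_1$ is assembling, so one defensive move may damage a large, correlated batch of $P_1$'s threats at once; the clean ``one block per turn'' accounting that closes the $t=3,4$ cases collapses because, once the core $C$ grows, the threats are no longer edge-disjoint. Making the reservoir argument robust to this correlated destruction---so that the number of $P_1$-moves forcing a surviving fork stays bounded independently of $n$---is exactly where a new idea is required, and I would focus all effort there, most plausibly by designing the target forks to be highly edge-disjoint (at a bounded multiplicative cost in moves) so that $P_2$'s blocks become essentially independent again.
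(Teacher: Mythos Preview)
The statement you are attempting to prove is not a theorem of the paper; it is Beck's conjecture, which the paper records as open (indeed, Beck lists it among his ``7 most humiliating open problems'' and the case $t=5$ is described as ``seemingly intractable''). The paper provides no proof; it only cites that $t=3,4$ are known and that for $t\ge 5$ nothing better than the Tur\'an bound $L(K_n,K_t)\le (1-\tfrac{1}{t-1}+o(1))\binom{n}{2}$ is available. So there is no ``paper's own proof'' to compare against.

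On the substance of your plan: the base cases you sketch for $t=3,4$ are in line with the known results, but your inductive step is not a proof---it is a description of where a proof would have to go, and you yourself correctly identify the fatal obstacle. The assertion that $P_1$ can ``keep the number of surviving threats growing faster than $P_2$ can prune it'' via an Erd\H{o}s--Selfridge-style weight argument is exactly what no one knows how to do: once the common core $C$ has more than a couple of vertices, any single $P_2$-edge inside or adjacent to $C$ kills \emph{all} of the threats simultaneously, so the ``reservoir'' has size effectively~$1$ with respect to $P_2$'s most efficient reply, and the fork never materialises. Your proposed fix---make the forks ``highly edge-disjoint''---is in tension with the inductive hypothesis, which requires the $K_{t-1}$'s to share a clique core precisely so that a single extension vertex can upgrade them; edge-disjoint $K_{t-1}$'s would each need $t-1$ fresh $P_1$-edges to extend, and $P_2$ blocks each extension one edge at a time. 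In short, the proposal does not close the gap for $t\ge 5$ and should be read as a research outline that halts at the known obstruction rather than a proof.
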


It was proven in~\cite{2002DMBeck} that the above conjecture holds for $t=3,4$.\footnote{For $t=4$, Bowler and Gut showed that  
the proof was inaccurate and fixed it~\cite{bowler2023k4game}.} However, surprisingly when $t\ge 5$, the best known upper bound is $(1-\frac{1}{t-1}+o(1))\binom{n}{2}$, which is trivial by the so-called Tur\'{a}n Theorem~\cite{1941Turan}.

Hefetz, Kusch, Narins, Pokrovskiy, Requil\v{e}, and Sarid~\cite{hefetz2017strong} explored a natural generalization from graphs to hypergraphs, completely altering the intuition about this phenomenon. Specifically, they proved the existence of a \(5\)-uniform hypergraph \(\mathcal{H}\) such that, in the game \(R(K_{n}^{(5)}, \mathcal{H})\), \(P_1\) cannot secure a win within a bounded number of moves. Subsequently, in~\cite{david2020strong}, David, Hartarsky, and Tiba investigated another intriguing generalization where the board consists of two vertex-disjoint copies of \(K_{n}\). They demonstrated that when \(H = K_{6} \setminus K_{4}\), \(P_1\) cannot win the game \(R(K_{n} \sqcup K_{n}, H)\) within a bounded number of moves.

For given graphs \( H \) and $\mathcal{B}$, to prove that \( C \) is an upper bound for \( L(\mathcal{B}, H) \), we must show that, regardless of how strategically \( P_2 \) plays, \( P_1 \) can always secure a win within \( C \) moves. Conversely, to establish that \( D \) is a lower bound for \( L(\mathcal{B}, H) \), we need to demonstrate that, no matter what strategy \( P_1 \) adopts, \( P_2 \) can always ensure the game lasts at least \( D \) moves by avoiding defeat within the first \( D - 1 \) moves. Moreover, one can see for any fixed graph $H$, if $P_{1}$ can win the game $R(K_{n}\sqcup K_{n},H)$ within a bounded number of moves, then $P_{1}$ can also win the game $R(K_{n},H)$.

The main result of this paper is to provide infinitely many examples $H$ such that $P_{1}$ cannot win the game $R(K_{n}\sqcup K_{n},H)$ within a bounded number of moves. More precisely, for integers $t\ge 2$ and $s\ge 0$, let $K_{2,t}(s)$ be the complete bipartite graph plus $s$ edge which is incident to some vertex in the smaller part, that is, $V(K_{2,t}(s)):=\{a_{1},a_{2},b_{1},b_{2},\ldots,b_{t},c_{1},\dots, c_{s}\}$, $a_{i}b_{j}\in E(K_{2,t}(s))$ for any $i\in\{1,2\}$ and $j\in\{1,2,\ldots,t\}$, and $a_{1}c_{i}\in E(K_{2,t}(s))$ for any $i\in [s]$. 

\begin{theorem}\label{thm:P1Lose}
For any $t\ge 3$ and constant $C$, there exists some constant $n_{0}$ such that for any $n\ge n_{0}$, we have
\begin{equation*}
    L(K_{n}\sqcup K_{n},K_{2,t+1}(t-2))> C.
\end{equation*}
\end{theorem}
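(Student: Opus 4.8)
The plan is to exhibit a strategy for $P_2$ and to show that, once $n$ is large in terms of $C$ and $t$, it prevents $P_1$ from completing a copy of $H:=K_{2,t+1}(t-2)$ within $C$ moves. I would begin from two structural facts about $H$. First, $H$ is connected, so every copy of $H$ that is ever built sits inside one of the two cliques; hence $P_1$ must assemble $H$ within a single board and $P_2$ never has to cope with both boards at once. Second, $\{a_1,a_2\}$ is a vertex cover of $H$, which translates into the following description of a finished copy: a graph $G$ contains $H$ exactly when there are distinct vertices $x,y$ with $|N_G(x)\cap N_G(y)|\ge t+1$ and $\deg_G(x)\ge 2t-1$ (the threshold on $\deg_G(x)$ being raised to $2t$ when $xy\in E(G)$), with $x$ playing $a_1$, $y$ playing $a_2$, the common neighbours playing the $b_j$'s and the remaining neighbours of $x$ playing the $c_k$'s. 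Consequently a single $P_1$-move can create a winning threat only from a position that already contains $H$ with one edge removed, and these near-copies fall into two sharply different types: if the removed edge is one of the $t-2$ pendant edges at $a_1$ then the threat has $\Theta(n)$ distinct completions and cannot be killed with one move; if the removed edge lies in the $K_{2,t+1}$ part then only boundedly many moves complete it. The whole argument will be organised around this dichotomy — $P_2$ must make sure $P_1$ never reaches the first, unblockable kind of position, and for the second kind needs to react to only one edge at a time.

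$P_2$'s strategy will be: whenever $P_1$'s last move leaves $P_1$ with a threat, claim an edge destroying it; otherwise play the image of $P_1$'s move on the opposite board under the canonical isomorphism between the two copies of $K_n$. The invariant I would try to maintain after each $P_2$-move is that $P_1$'s graph on each board contains no copy of $H$ minus one edge with the missing edge still available, and moreover contains no vertex of degree $2t-2$ that already lies in a $K_{2,t+1}$ (so the unblockable pendant-type threat never appears); the role of the second board is exactly to give $P_2$ the room to keep a faithful shadow of $P_1$'s development, which is what keeps $P_1$'s structure on the attacked board as controlled as on the other one. The technical heart is to check that $P_2$ can restore this invariant with the single mirror-or-block move it has each turn.

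The step I expect to be the main obstacle is showing that $P_1$ can never, in one move from an invariant-respecting position, create a \emph{double} threat — two free edges each completing $H$, or one edge completing $H$ in essentially two ways — since only then would $P_2$'s single block fail. This is precisely where the degree sequence of $K_{2,t+1}(t-2)$ is used: a new edge $xy$ raises $|N(x)\cap N(w)|$ only for $w\in N(y)$ and $|N(y)\cap N(w)|$ only for $w\in N(x)$, so a forking move would force a very rigid dense pattern on $O(t)$ vertices — morally two near-copies of $H$ glued along one edge, which in particular needs two would-be partners already sharing $t$ neighbours with a common hub of degree $2t-2$ — and one has to verify, using the blocks already forced by the cheaper single-edge threats together with the fact that $P_1$'s at most $C$ edges cannot pack such a pattern around a single edge before $P_2$ pre-empts that edge, that the pattern never arises within $C$ moves. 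Largeness of $n$ enters in two further places: it guarantees $P_2$ always has an unclaimed mirror or blocking edge to play, and it guarantees that the boundedly many near-misses $P_1$ can afford to begin stay scattered across the huge board rather than merging into a double threat. Iterating the invariant for $C$ rounds then shows $P_1$ has not built $H$, i.e.\ $L(K_n\sqcup K_n,K_{2,t+1}(t-2))>C$.
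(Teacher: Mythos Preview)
Your mirror-or-block strategy does not maintain the invariant you state, and the gap is not a technicality. Take $t=3$ and let $P_1$ build $K_{2,4}$ on board $1$ edge by edge. After seven moves $P_1$ holds $K_{2,4}$ minus one edge, which has only $3t-2=7$ edges and hence cannot contain any copy of $H$ minus one edge (that would need $3t-1$ edges); so by your rule $P_2$ has been mirroring throughout. $P_1$'s eighth move completes $K_{2,4}$, which is exactly the unblockable configuration $H$ minus one pendant: any of $\Theta(n)$ fresh pendant edges now finishes $H$. Your invariant forbids this position, but nothing in your strategy has prevented it, and $P_1$ wins on move $9$. The same thing happens for every $t\ge3$: $P_1$ can reach the unblockable threat $K_{2,t+1}$ plus $t-3$ pendants in $3t-1$ moves, and at every earlier moment $P_1$'s graph has at most $3t-2$ edges and therefore contains no $H$-minus-one-edge of either type, so your rule never blocks. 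Mirroring on board $2$ is no help --- $P_2$'s copy is always a move behind and $P_1$ finishes first --- and the appeal to ``$P_1$'s at most $C$ edges cannot pack such a pattern'' is vacuous because $C$ is arbitrary.

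The paper's mechanism is different in kind. $P_2$ neither mirrors nor tries to block $P_1$'s threats directly; instead $P_2$ plays \emph{offensively} on board $2$, spending its first $3t-1$ moves there to assemble a copy of $K_{2,t}(t-1)$. Once this is in place, each further $P_2$-move of the form ``attach a fresh vertex to one of the two centres'' is itself a check, so from that point $P_1$ is forced to answer on board $2$ every round, which freezes $P_1$'s progress on board $1$. A short counting argument (tracking the surplus $e(G_i^{(1)})-e(F_i^{(1)})$) shows $P_1$ cannot accumulate the $3t$ board-$1$ edges needed for $H$ before $P_2$'s perpetual check begins; one exceptional opening --- $P_1$'s first $2t+1$ moves already forming $H$ minus $t-1$ edges on board $1$ --- is handled by a direct defence there. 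The idea you are missing is that the second board is leverage for $P_2$'s own attack, not a place to echo $P_1$.
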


In particular, the condition $t\ge 3$ is optimal. By definition, when $t=2$, $K_{2,t+1}(t-2)$ is exactly the complete bipartite graph $K_{2,3}$. We can show that $P_{1}$ can easily win the game $R(K_{n}\sqcup K_{n},H)$ when $H$ is $K_{2,3}$. 
Moreover, we can also show that $P_{1}$ can win the game $R(K_{n},C_{\ell})$, where $C_{\ell}$ is the cycle of length $\ell\ge 3$.

\begin{theorem}\label{thm:K23}
Let $n$ be a large enough number, then $P_{1}$ can win the following games using a constant number of moves.
\begin{enumerate}
    \item $R(K_{n}\sqcup K_{n},K_{2,3})$;
    \item $R(K_{n},C_{\ell})$ for any $\ell\ge 3$.
\end{enumerate}
\end{theorem}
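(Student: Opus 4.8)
I would handle the two parts separately; the cycle case is cleaner, so let me start there. For $R(K_n,C_\ell)$ the plan is to give $P_1$ an explicit strategy that wins in at most $\ell+2$ moves. In its first $\ell-1$ moves $P_1$ greedily grows a path $v_0v_1\cdots v_{\ell-1}$ of length $\ell-1$, at each step extending the current path from an end to a brand-new vertex; $P_2$ cannot obstruct this, since there are $\Theta(n)$ possible continuations and $P_2$ has so far claimed only $O(\ell)$ edges, and $P_1$ picks $v_{\ell-1}$ so that $v_0v_{\ell-1}$ remains free. This finished path threatens to close $C_\ell$, so $P_2$ is forced to claim $v_0v_{\ell-1}$. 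On move $\ell$, $P_1$ claims $v_{\ell-2}w$ for a fresh $w$ with $v_0w$ free: the path $v_0\cdots v_{\ell-2}w$ again has length $\ell-1$, forcing $P_2$ to claim $v_0w$. On move $\ell+1$, $P_1$ claims $v_1u$ for a fresh $u$ with $uv_{\ell-1}$ and $uw$ both free. This single edge creates \emph{two} closing threats at once, since both $u v_1 v_2\cdots v_{\ell-2}v_{\ell-1}$ and $u v_1 v_2\cdots v_{\ell-2}w$ are length-$(\ell-1)$ paths, to be closed by $uv_{\ell-1}$ and $uw$ respectively; because $u$ is brand new, $P_2$ cannot have pre-emptively claimed either edge, so $P_2$ parries at most one and $P_1$ wins on move $\ell+2$.

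The heart of this, and the first thing I would verify carefully, is a tempo count. $P_1$'s first threat appears on move $\ell-1$, i.e.\ at game-move $2\ell-3$, strictly before $P_2$ could possibly threaten, since any threat of $P_2$ also requires a path of length $\ell-1$, hence $\ell-1$ of $P_2$'s moves, ready only at game-move $2\ell-2$. Thus from its $(\ell-1)$-st move on, $P_2$ is purely defensive: moves $\ell-1,\ell,\ell+1$ are all forced blocks, since ignoring any of them hands $P_1$ the game at once. Consequently $P_2$ has had only $\ell-2$ free moves in the whole game, fewer than the $\ell$ edges it would need to build $C_\ell$ itself, so $P_2$ cannot win first. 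One also has to check that the three edges $P_2$ is forced to claim cannot help it — this is automatic because $P_1$ builds its path on fresh vertices, keeping $P_2$'s own graph vertex-disjoint from the path. (For $\ell=3$ the move-$\ell$ pendant already creates two threats, recovering Beck's four-move $K_3$ strategy.)

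For $R(K_n\sqcup K_n,K_{2,3})$ the situation is different: on a single board $P_1$ cannot reach a double threat for $K_{2,3}$ quickly enough to outrun a racing opponent, and it is exactly here that the second board is used. The plan is for $P_1$ to commit to building a $K_{2,3}$ on board $1$ by a Maker-type routine — first build the $K_{2,2}$ on $\{a_1,a_2\}\times\{b_1,b_2\}$, then keep extending $a_1$'s star, each new edge $a_1b_j$ threatening $a_2b_j$, so that whenever $P_2$ blocks on board $1$ the threat is simply recreated one move later. A clean Maker–Breaker analysis of $K_{2,3}$ should give that this routine finishes in at most $6+j$ of $P_1$'s moves, where $j$ is the number of moves $P_2$ ever spends on board $1$ (exactly $6$ when $j=0$, since an unobstructed $K_{2,3}$ takes $6$ and each block costs $P_1$ precisely one re-extension of $a_1$'s star). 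Then the bookkeeping: if $P_2$ instead races its own $K_{2,3}$ it essentially must do so on board $2$, and each move it diverts to board $1$ delays that race by a full tempo; since $P_1$ moves first and loses at most one tempo per diversion, $P_1$ completes $K_{2,3}$ first, within a bounded number of moves. (When $H=K_{2,3}=K_{3,2}$, i.e.\ the $t=2$, $s=0$ case, even a single board suffices: $P_1$ grows a star at $b_1$ and, once it has enough leaves, joins $b_2$ to three of them; the two-board phrasing in Theorem~\ref{thm:K23} is simply what the above actually delivers.)

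The main obstacle is making the $K_{2,3}$ bookkeeping airtight. One must rule out $P_2$ profitably interleaving (i) progress toward its own $K_{2,3}$ with (ii) stalling $P_1$'s star on board $1$ or repeatedly threatening so as to force $P_1$ to block; the crux is to show both that $P_1$'s Maker routine wastes at most one move per $P_2$ interference and that every threat $P_2$ manufactures costs $P_2$ at least as many tempi to set up as it costs $P_1$ to neutralise — so that $P_1$'s one-tempo head start is never overturned, and in particular $P_2$ can sneak a $K_{2,3}$ onto neither board in time. By contrast the cycle case needs no such care: there the only things to double-check are that the greedy path can always be extended and that $P_2$'s forced moves are harmless, both routine for $n$ large.
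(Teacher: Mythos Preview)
Your cycle argument has a concrete gap. The assertion that the forced edges ``cannot help $P_2$'' because ``$P_1$ builds its path on fresh vertices, keeping $P_2$'s own graph vertex-disjoint from the path'' is false: the vertex $v_0$ has been on the board since move~$1$, and $P_2$ can anchor its own path there. Concretely, let $P_2$ spend its first $\ell-2$ moves building a path $v_0 u_1 u_2\cdots u_{\ell-2}$ on fresh $u_i$. When on its $(\ell-1)$-st move $P_2$ is forced to claim $v_0 v_{\ell-1}$, this hands $P_2$ the path $u_{\ell-2}\cdots u_1 v_0 v_{\ell-1}$ of length $\ell-1$; since $v_{\ell-1}$ was brand new, $u_{\ell-2}v_{\ell-1}$ is unclaimed and $P_2$ is now in check. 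If $P_1$ plays $v_{\ell-2}w$ on move~$\ell$ as you prescribe, $P_2$ takes $u_{\ell-2}v_{\ell-1}$ and wins. The paper avoids exactly this trap: it builds only a length-$(\ell-2)$ path, argues that $P_2$ cannot have a length-$(\ell-2)$ path with the \emph{same} pair of endpoints, and then orients all subsequent pendants so that every edge $P_2$ is forced to claim is incident to the endpoint $v_{\ell-2}$ that is \emph{not} the tip of any length-$(\ell-2)$ path of $P_2$. That endpoint-control step is doing the real work and is missing from your outline.

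For $K_{2,3}$ your tempo bookkeeping is not yet a proof, as you concede. The obstacle is not $P_2$ blocking on board~$1$ --- that you handle --- but $P_2$ \emph{threatening}, which forces $P_1$ off its routine. Your claimed invariant that ``every threat $P_2$ manufactures costs $P_2$ at least as many tempi to set up as it costs $P_1$ to neutralise'' is precisely the statement that needs proof, and it is false for general targets (the paper's main theorem shows that for $K_{2,t+1}(t-2)$, $t\ge3$, the second player can generate perpetual checks on one board and the tempo argument collapses). Showing it holds for $K_{2,3}$ requires actually analysing what $P_2$ can do with five edges; the paper carries this out via an explicit game tree on a single board --- the second copy of $K_n$ is never used --- reduced by two structural lemmas about favourable $C_4$ and $K_{2,2}(1)$ positions. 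Your two descriptions of $P_1$'s routine (``build the $K_{2,2}$ first'' versus ``grow a star first'') also disagree, and neither comes with the threat analysis needed to upgrade a Maker--Breaker win to a strong-game win.
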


The main advantage is, our proofs are concise and straightforward, without relying on computer-assisted exhaustive searches. For convenience, we will use \( K_{\infty} \) to denote the complete graph \( K_n \) when \( n \) is sufficiently large.

\section{$P_{1}$ cannot win in $R(K_{\infty}\sqcup K_{\infty},K_{2,t+1}(t-2))$}
    Let $\mathcal{B}_1 \cong K_{\infty}$, and  $\mathcal{B}_2 \cong K_{\infty}$ be two disjoint boards.
    Without loss of generality, we can assume $P_{1}$ claims the first edge in $\mathcal{B}_1$.
    For a given positive integer $i$, let $G_{i}^{(1)}$ and $G_{i}^{(2)}$ be the graphs in $\mathcal{B}_{1}$ and $\mathcal{B}_{2}$ that $P_{1}$ claimed after round $i$, respectively, and let $F_{i}^{(1)}$ and $F_{i}^{(2)}$ be the graphs in $\mathcal{B}_{1}$ and $\mathcal{B}_{2}$ that $P_{2}$ claimed after round $i$, respectively.

    In order to show the lower bound on $L(K_{\infty}\sqcup K_{\infty}, K_{2,t+1}(t-2))$, we next propose a strategy for $P_{2}$ and show that, if $P_{2}$ strictly obeys this strategy, then $P_2$ can successfully prevent $P_{1}$ from winning the game within $C$ moves for any constant $C$. 

    Note that $e(K_{2,t+1}(t-2)) =3t$.
    Let $\mathcal{F}$ be the family consisting of all graphs obtained from $K_{2,t+1}(t-2)$ by removing $t-1$ edges.
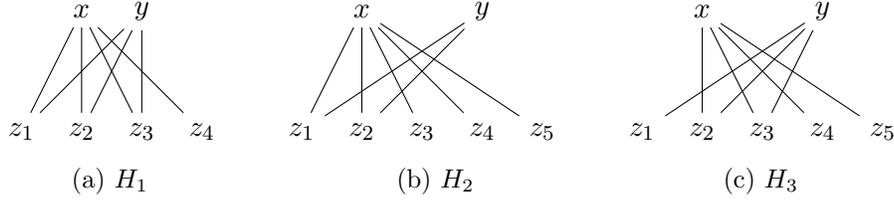
\begin{figure}[htbp]
    \centering

    \begin{subfigure}[b]{0.25\textwidth} 
    \centering
    \begin{tikzpicture}[scale=0.4]
        % Vertices  
        \node (A1) at (2,0) {$x$};
        \node (A2) at (4,0) {$y$};
        \node (B1) at (0,-4) {$z_1$};
        \node (B2) at (2,-4) {$z_2$};
        \node (B3) at (4,-4) {$z_3$};
        \node (B4) at (6,-4) {$z_4$};
        % Edges
        \foreach \from in {A1,A2}
        \foreach \to in {B1,B2,B3}
        \draw (\from) -- (\to);
        \draw (A1) -- (B4);
    \end{tikzpicture}
    \caption{$H_1$}
    \end{subfigure}
    \begin{subfigure}[b]{0.25\textwidth} 
    \begin{tikzpicture}[scale=0.4]
        % Vertices  
        \node (A1) at (2,0) {$x$};
        \node (A2) at (6,0) {$y$};
        \node (B1) at (0,-4) {$z_1$};
        \node (B2) at (2,-4) {$z_2$};
        \node (B3) at (4,-4) {$z_3$};
        \node (B4) at (6,-4) {$z_4$};
        \node (B5) at (8,-4) {$z_5$};
        % Edges
        \foreach \from in {A1}
        \foreach \to in {B1,B2,B3,B4,B5}
        \draw (\from) -- (\to);
        \draw (A2) -- (B1);
        \draw (A2) -- (B2);
    \end{tikzpicture}
    \caption{$H_2$}
    \end{subfigure}
    \begin{subfigure}[b]{0.25\textwidth} 
    \centering
    \begin{tikzpicture}[scale=0.4]
        % Vertices  
        \node (A1) at (2,0) {$x$};
        \node (A2) at (6,0) {$y$};
        \node (B1) at (0,-4) {$z_1$};
        \node (B2) at (2,-4) {$z_2$};
        \node (B3) at (4,-4) {$z_3$};
        \node (B4) at (6,-4) {$z_4$};
        \node (B5) at (8,-4) {$z_5$};
        % Edges
        \foreach \from in {A1}
        \foreach \to in {B2,B3,B4,B5}
        \draw (\from) -- (\to);
        \draw (A2) -- (B1);
        \draw (A2) -- (B2);
        \draw (A2) -- (B3);
    \end{tikzpicture}
    \caption{$H_3$}
    \end{subfigure}
    \caption{For $t=3$, all possible graphs obtained from $K_{2,4}(1)$ by removing $2$ edges (under isomorphism)
    .}
    \label{fig:enter-label}
\end{figure}

Given a graph $G$, we say a vertex $u$ is a core of $G$ if $d_{G}(u)\ge 3$.
We say a vertex is \emph{unused} if, at that moment, there is no edge already being claimed on the board that is incident to this vertex. 
The term \emph{check} refers to a situation where a player will win in the next move if the opponent gives up the next move.
We say the player $P$ \emph{is 
in check} where  the opponent will win the game if $P$ gives up the next move.
%We say a copy of $G$ is \emph{pure} with respect to $P_{1}$ ($P_{2}$, respectively), if $P_{1}$ ($P_{2}$, respectively) claims this $G$ but $P_{2}$ ($P_{1}$, respectively) does not claim any edge in $\mathcal{B}[V(G)]$.
\begin{figure}[htbp]
    \centering
        \begin{tikzpicture}[scale=0.4]
        % Vertices  
        \node (A1) at (2,0) {$u$};
        \node (A2) at (6,0) {$v$};
        \node (B1) at (0,-4) {$x_1$};
        \node (B2) at (2,-4) {$x_2$};
        \node (B3) at (4,-4) {$x_3$};
        \node (B4) at (6,-4) {$x_4$};
        \node (B5) at (8,-4) {$x_5$};
        % Edges
        \foreach \from in {A1}
        \foreach \to in {B1,B2,B3,B4,B5}
        \draw (\from) -- (\to);
        \draw (A2) -- (B1);
        \draw (A2) -- (B2);
        \draw (A2) -- (B3);
    \end{tikzpicture}
    \caption{The graph $K_{2,3}(2)$: key of $P_{2}$ for $t=3$}
    \label{fig:KeyforP2}
\end{figure}
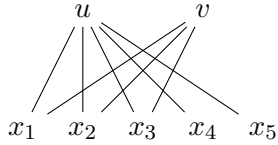

\begin{strategy}[Strategy for $P_{2}$]
Suppose that the game is not finished after $P_{1}$'s $i$-th move, $P_{2}$'s $i$-th move should obey the following rules. Also note that these strategies have different priorities, and $P_{2}$ will only execute the next strategy if the previous one does not occur.

\begin{Case}
    \item If $P_{2}$ has a chance to win the game after the $i$-th move, then $P_{2}$ claims that edge and wins the game.

    \item If Case~1 does not occur, and $G^{(1)}_{2t+1}\cong F$ for some $F \in \mathcal{F}$. 
    Recall that $F$ can be obtained from $K_{2,t+1}(t-2)$ by removing $t-1$ edges. Let $x,y$ be the core of the original $K_{2,t+1}(t-2)$, $A$ be the common neighbor of $x,y$ in $F$, and let $B$ be the set of the vertices that are adjacent to exactly one of $x,y$ in $F$.
    Clearly, we have $e(F) =2|A|+ |B| =2t+1$, which implies that $|B|$ is odd. Let $|A|=a, |B| =2b-1$, where $b\ge 1$.
    Since $2t-1\ge |A|+|B| =2t+1-|A|$, we have $a\ge2$.
    \begin{enumerate}
    \item In round $2t+1$, $P_2$ picks an arbitrary vertex  $w\in B$ and claims the unused edge $xw$ or $yw$. Then we set $B = B\setminus \{w\}$.

    \item Suppose that in some following rounds, $P_{1}$ claims the edge $xw$ or $yw$ for some unused vertex $w$, then $P_{2}$ claims $yw$ or $xw$, respectively.

    \item  Suppose that in some following rounds, $P_1$ claims the edge $xw$ or $yw$ for some vertex $w \in B$, then $P_2$ picks another vertex $w'\in B$ and claims the unused edge $xw$ or $yw$. Then we set $B = B\setminus \{w,w'\}$.
    \end{enumerate}      
    \item If neither Case~1 nor Case~2 occurs, then $P_{2}$ should try to build a copy of $K_{2,t}(t-1)$, which is shown in Figure~\ref{fig:KeyforP2} for $t=3$, in $\mathcal{B}_{2}$. More precisely, to build the graph $K_{2,t}(t-1)$, $P_{2}$ should obey the following order if, during this process, none of the preceding cases occur.

    \begin{enumerate}
        \item $P_{2}$ first claims an arbitrary edge $ux_{1}$, where $u$ and $x_{1}$ are not incident to any claimed edge. Then $P_{2}$ can repeatedly pick $2t-2$ more unused vertices, say $x_{2},x_{3},\dots ,x_{2t-1}$ and claim $ux_{2},ux_{3},\dots,ux_{2t-1}$ in this order.
        \item $P_{2}$ next picks an unused vertex, say $v$, and then claims $t$ of the $2t-1$ edges $vx_{1}$,$vx_{2}$, $\ldots,vx_{2t-1}$ with a preference for edges $vx_{j}$ according to the following rules:
        if at that moment $P_1$ claims a star then  we give priority to vertex $x_j$ with degree one in the graph claimed by $P_1$ in $\mathcal{B}_2$, otherwise  we prioritize vertex $x_j$ with the highest degree in the graph claimed by $P_1$ in $\mathcal{B}_2$.
    \end{enumerate} 
    \item If $P_{2}$ has claimed a copy of $K_{2,t}(t-1)$ in $\mathcal{B}_{2}$ with vertex set $\{u,v,x_{1},x_{2},\dots,x_{2t-1}\}$, and none of the events in Case~1 and Case~2 occurs, then $P_{2}$ picks an unused vertex $w$. If $P_1$ does not have a $K_{2,t}$ with $u$ as a core, then $P_2$ claims the edge $vw$, otherwise $P_2$ claims $uw$.
\end{Case}
\end{strategy}

Next, we would like to show that, if $P_{2}$ strictly obeys the above strategy, then $P_{2}$ can guarantee that $P_{1}$ cannot win the game within a finite number of moves.

We need a series of claims as follows.

\begin{claim}
   If $P_{2}$ wants to claim a copy of graph $K_{2,t}(t-1)$ in $\mathcal{B}_{2}$, $P_2$ can always achieve it after $3t-1$ moves, provided that during the process none of Case~1 and Case~2 occur.
\end{claim}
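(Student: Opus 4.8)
The plan is to follow the two phases of Case~3 of the strategy and check that, under the stated proviso, $P_2$ can always carry out the prescribed move, so that a copy of $K_{2,t}(t-1)$ is completed after exactly $e(K_{2,t}(t-1)) = 2t+(t-1) = 3t-1$ of $P_2$'s edges in $\mathcal{B}_2$. The first (``easy'') part is the star phase, step (a): here $P_2$ wants to claim $ux_1, ux_2, \ldots, ux_{2t-1}$, where $u$ and each $x_i$ is unused at the moment it is first touched. Since $\mathcal{B}_2 \cong K_\infty$ and only boundedly many vertices have ever been incident to a claimed edge (the part of the game under consideration lasts boundedly many rounds, while $n$ may be taken as large as we like), at each of these $2t-1$ steps there is still an unused vertex available to serve as the next $x_i$ (or as $u$ at the very start); because that vertex is unused, the edge joining it to $u$ is unclaimed, so the move is legal. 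Thus step (a) costs $P_2$ exactly $2t-1$ moves. The same remark lets $P_2$ pick an unused vertex $v$ at the start of step (b): since $v$ is unused, none of the $2t-1$ edges $vx_1, \ldots, vx_{2t-1}$ is claimed yet, and moreover $v \notin \{u, x_1, \ldots, x_{2t-1}\}$ automatically.

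The crux is step (b): $P_2$ must secure $t$ of the $2t-1$ edges $vx_1, \ldots, vx_{2t-1}$, even though in every round $P_1$ moves before $P_2$ and may grab one of these very edges. I would argue by a direct count. Suppose $P_2$ makes its $m$-th claim of step (b) in round $r_m$, so $r_1$ is the round in which $v$ is introduced. Right after $P_2$'s move in round $r_1$, $P_1$ holds none of the edges $vx_j$ (the vertex $v$ was unused before $P_1$'s move that round, and $P_2$ then chooses $v$ to avoid whatever $P_1$ just played), while $P_2$ holds exactly one of them. Between rounds $r_1$ and $r_m$, $P_1$ makes $m-1$ moves, hence holds at most $m-1$ of the edges $vx_j$; and just before its $m$-th claim, $P_2$ holds exactly $m-1$ of them. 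So at most $2m-2$ of the $2t-1$ edges are unavailable, leaving at least $2t-1-(2m-2) = 2t+1-2m$; this is $\ge 1$ for every $m \le t$, so $P_2$ can make all $t$ claims, the final one ($m=t$) being possible because exactly one suitable edge still remains. This finishes the copy of $K_{2,t}(t-1)$ after $(2t-1)+t = 3t-1$ moves of $P_2$ in $\mathcal{B}_2$.

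The main obstacle is precisely this last step, and it is tight: the bound $2t+1-2m$ equals $1$ exactly at $m=t$, so there is no slack, and one has to be scrupulous that it really is $P_1$ who moves first in each round of step (b) and that $v$ is chosen only after $P_1$'s move, so $P_1$ cannot have ``pre-blocked'' an edge at $v$. The only other point to keep in mind is that throughout we are invoking the hypothesis that neither Case~1 nor Case~2 is triggered during the process; this is exactly what guarantees that on each of $P_2$'s turns $P_2$ actually plays the Case~3 move (a star edge $ux_i$, then the vertex $v$, then an edge $vx_j$) rather than deviating, after which the bookkeeping above applies verbatim.
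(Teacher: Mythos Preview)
Your argument is correct and follows the same approach as the paper's proof: build the star $K_{1,2t-1}$ on unused vertices, then use a pigeonhole-type count on the $2t-1$ edges $vx_1,\dots,vx_{2t-1}$ to show $P_2$ secures $t$ of them. The paper compresses your entire step~(b) analysis into the phrase ``by the pigeonhole principle,'' whereas you explicitly track that before $P_2$'s $m$-th claim at most $2m-2$ of the $2t-1$ edges are gone; your version is more detailed but not genuinely different.
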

\begin{poc}
    By the embedding rules listed in Case~3 above, $P_{2}$ can always find a copy of $K_{1,2t-1}$ with edges $ux_{1},ux_{2},\dots,ux_{2t-1}$, and as $v$ is an unused vertex, by the pigeonhole principle, $P_{2}$ can guarantee to claim $t$ of edges $vx_{1},vx_{2},\dots,vx_{2t-1}$.
\end{poc}

Let $\Delta(i)=e(G_{i}^{(1)})-e(F_{i}^{(1)})$. Note that if $P_1$ wins in round $i$, then $\Delta(i)$ does not exist.

\begin{claim}\label{claim:delta>8}
    If $P_{1}$ finally claims a copy of $K_{2,t+1}(t-2)$ in $\mathcal{B}_{1}$, then there must be some $i$ such that $\Delta(i)=3t$.
\end{claim}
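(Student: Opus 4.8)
The plan is a discrete intermediate-value argument for $\Delta$. First I would record that in any single round each of $P_1$ and $P_2$ claims at most one edge of $\mathcal{B}_1$, so $\Delta(i)-\Delta(i-1)\in\{-1,0,1\}$, and that $\Delta(1)\le 1<3t$ (recall $P_1$'s first edge is in $\mathcal{B}_1$ and $t\ge3$). Hence it suffices to exhibit a single round $i$, occurring before $P_1$ wins, with $\Delta(i)\ge 3t$: the value $3t$ then cannot be stepped over, so $\Delta(j)=3t$ for some $j$. The remainder of the argument produces such a round.

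Assume $P_1$ eventually claims a copy $K$ of $K_{2,t+1}(t-2)$ in $\mathcal{B}_1$. Since $K$ contains a copy of every member of $\mathcal{F}$, at some round $r$ (before the win) $P_1$'s graph in $\mathcal{B}_1$ first contains such a sub-copy, and Case~2 is triggered; write $x,y$ for its cores and $a=|A|$, $|B|=2b-1$ as in the strategy, so that $2a+(2b-1)=2t+1$ and therefore $a+b=t+1$. At round $r$, $P_1$ owns at least $2t+1$ edges of $\mathcal{B}_1$ while $P_2$ owns exactly one (from Case~2(a)), so $\Delta(r)\ge 2t$. (If Case~2 is never triggered, then $P_2$ never plays in $\mathcal{B}_1$, so $\Delta(i)=e(G_i^{(1)})$ throughout; but at the round when $P_1$'s $\mathcal{B}_1$-graph first had $2t+1$ edges it was not isomorphic to any member of $\mathcal{F}$, whereas every $(2t+1)$-edge subgraph of $K$ is, so $P_1$ must already have claimed a $\mathcal{B}_1$-edge outside $K$; then just before $P_1$'s winning move $P_1$ owns at least $3t$ edges of $\mathcal{B}_1$, so $\Delta\ge 3t$ there.)

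Now I claim $P_1$ is forced into at least $t$ additional rounds on which $\Delta$ increases. The key structural point is that $P_2$'s Case~2 responses permanently prevent $P_1$ from owning $t+1$ common neighbours of $x$ and $y$: Case~2(b) spoils every attempt to turn an unused vertex into such a common neighbour, and Cases~2(a),(c) consume the vertices of $B$ so that $P_1$ converts at most $b-1$ of them, for a total of at most $a+(b-1)=t$. Hence the pair of cores of $K$ is not $\{x,y\}$, so some core $z$ of $K$ lies outside $\{x,y\}$. Every edge of $K$ at $z$ whose other endpoint also lies outside $\{x,y\}$ is, at the moment $P_1$ claims it, answered by $P_2$ with a move in $\mathcal{B}_2$ (no Case~2 response applies), so that round increases $\Delta$; and since at most two vertices of $K$ equal $x$ or $y$, at least $\deg_K(z)-2\ge t-1$ edges of $K$ at $z$ are of this type. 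A short further argument --- using the second core of $K$ when it too avoids $\{x,y\}$, and noting that each time $P_1$ reuses an edge of $F$ within $K$ it must spend a compensating $\Delta$-increasing move (such as claiming the edge $xy$) --- raises this to at least $t$ edges claimed after round $r$, at most one of which can be $P_1$'s final winning move. Thus $\Delta$ rises from $\ge 2t$ to $\ge 3t$, which finishes the proof.

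The delicate part, and where I expect the real work, is the third paragraph: turning the informal ``$B$ is consumed in pairs'' into a genuine invariant capping the common neighbours of $x$ and $y$ at $t$ (in particular, checking that whenever $P_2$ has to answer a move $xw$ or $yw$ the partner edge $yw$ or $xw$ is still unclaimed), and then pinning down the edge-count for the alternative copy $K$ precisely enough to secure the full $t$ extra $\Delta$-increasing moves, accounting for the vertices of $K$ that coincide with $x$ or $y$ and for which of $P_1$'s $\mathcal{B}_1$-moves $P_2$ happens to answer inside $\mathcal{B}_1$.
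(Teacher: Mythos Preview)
Your plan follows the paper's proof closely: both split on whether Case~2 is triggered, both argue that in the triggered case $P_1$ cannot keep $\{x,y\}$ as the cores of the eventual copy $K$ and must build $K$ around a new core $z\notin\{x,y\}$, and both then count how many edges of $K$ fall outside the scope of $P_2$'s Case~2 replies (so that claiming them forces $\Delta$ up). Your explicit discrete IVT is a clean way to package the last step, which the paper leaves implicit.

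The real gap is in your edge count. You obtain only $\deg_K(z)-2\ge t-1$ edges of $K$ at $z$ with the other endpoint outside $\{x,y\}$, and then gesture at a boost to $t$. But $t$ is still not enough: one of those edges may be $P_1$'s winning move, leaving only $t-1$ claimed beforehand, so $\Delta$ reaches only $2t+(t-1)=3t-1$. You actually need $t+1$, and the paper gets it directly. In the shared-core case, say $x=x'$, the other core $z=y'$ is \emph{not} adjacent to $x$ in $K$ (the two cores of $K_{2,t+1}(t-2)$ are non-adjacent), so at most one neighbour of $z$ lies in $\{x,y\}$; and if that neighbour is $y$, then the edge $x'y=xy$ is itself an edge of $K$ which is not of the form $xw$ or $yw$ with $w\notin\{x,y\}$, recovering the missing count. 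Either way one has $\ge t+1$ such edges, hence $\ge t$ of them before the winning move, and $\Delta$ reaches $3t$. In the disjoint-core case the paper simply observes that all $3t$ edges of $K$ are incident to $\{x',y'\}$ and at most four of them meet $\{x,y\}$, giving $3t-4\ge t+1$ (using $t\ge 3$).

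A smaller issue: Case~2 is triggered precisely when $G^{(1)}_{2t+1}\cong F$ for some $F\in\mathcal{F}$, a one-time isomorphism test at round $2t+1$, not at whichever round $P_1$'s $\mathcal{B}_1$-graph first \emph{contains} a member of $\mathcal{F}$. Your parenthetical argument for the non-triggered case uses the latter reading, so it does not cover the possibility that $P_1$ spent one of the first $2t+1$ rounds in $\mathcal{B}_2$; the paper instead argues that if the test fails then $e(G^{(1)}_{3t})\le 3t$ forces $G^{(1)}_{3t}$ not to contain $K_{2,t+1}(t-2)$, so round $3t$ occurs before any win.
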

\begin{poc}
If $G^{(1)}_{2t+1}\cong F$ for some $F \in \mathcal{F}$, then $P_{2}$ should start to defend in round $2t+1$, which implies $\Delta(2t+1)=2t$.
Recalling the definition of the graph
$F$, we know that $a+b = t+1$. 
So if $P_{2}$ obeys the strategy, then $P_{1}$ cannot claim a copy of $K_{2,t+1}(t-2)$ with core vertices $x,y$ within a finite number of moves. That means, if $P_1$ finally claims a copy of $K_{2,t+1}(t-2)$ in $\mathcal{B}_1$, then $P_{1}$ should find another pair of core vertices $x',y'$, where $|\{x,y\}\cap\{x',y'\}|\le 1$.
%for claiming a copy of $K_{2,4}(1)$ in finite number of moves. 
If $|\{x,y\}\cap\{x',y'\}|= 1$, consider the codegree of the core vertices $x',y'$, $P_{1}$ needs to claim at least $t+1$ more edges in $\mathcal{B}_{1}$ which are not equal to $xw$ or $yw$ for some $w\notin \{x,y\}$.
If $|\{x,y\}\cap\{x',y'\}|= 0$, consider the degree of the core vertices $x',y'$, $P_{1}$ needs to claim at least $3t-4$ more edges in $\mathcal{B}_{1}$ which are not equal to $xw$ or $yw$ for some $w\notin \{x,y\}$.
Since $t\ge 3$, we have $3t-4\ge t+1$.
Based on $P_{2}$'s strategy, there must be some integer $i$ such that $\Delta(i)\ge 2t+t=3t$.

If $G^{(1)}_{2t+1}\ncong F$ for any $F \in \mathcal{F}$, we have $G^{(1)}_{3t}$ does not contain $K_{2,t+1}(t-2)$. Note that $P_2$ does not claim any edges in $\mathcal{B}_{1}$, thus $\Delta(3t)=3t$.
\end{poc}

\begin{claim}\label{claim:P1CannotWin1}
     $P_{1}$ can not claim a copy of $K_{2,t+1}(t-2)$ in $\mathcal{B}_{1}$.
\end{claim}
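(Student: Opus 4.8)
The strategy is to combine the counting estimate from Claim~\ref{claim:delta>8} with a move-parity argument showing $\Delta(i)$ can never actually reach $3t$ while $P_2$ is defending. First I would note that by Claim~\ref{claim:delta>8}, if $P_1$ ever completes a copy of $K_{2,t+1}(t-2)$ in $\mathcal{B}_1$, then there is a round $i$ with $\Delta(i) = e(G_i^{(1)}) - e(F_i^{(1)}) = 3t$. So it suffices to show that $\Delta(i) \le 3t - 1$ for every round $i$, i.e. that $P_2$'s strategy forces $P_2$ to have claimed enough edges in $\mathcal{B}_1$ to keep the gap strictly below $3t$.

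The key observation is how $\Delta$ evolves. As long as Case~2 has not been triggered, $P_2$ plays entirely in $\mathcal{B}_2$ (Cases 3--5), so each of $P_1$'s moves in $\mathcal{B}_1$ increases $\Delta$ by at most $1$, and after $P_1$'s $(2t+1)$-st move we have $\Delta(2t+1) \le 2t+1$; if $G^{(1)}_{2t+1} \not\cong F$ for every $F \in \mathcal{F}$ then in fact $G^{(1)}_{3t}$ is already $K_{2,t+1}(t-2)$-free and $\Delta$ stays at $3t$ only in the degenerate no-win sense handled in the previous claim, so the substantive case is $G^{(1)}_{2t+1}\cong F$. Once Case~2 activates in round $2t+1$, $P_2$ starts mirroring inside $\mathcal{B}_1$: every time $P_1$ plays an edge $xw$ or $yw$ incident to a core candidate of $F$, $P_2$ immediately answers with the ``partner'' edge $yw$ or $xw$ (Case 2(b)), so these paired moves do not change $\Delta$ at all. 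Thus the only moves of $P_1$ in $\mathcal{B}_1$ that raise $\Delta$ beyond $2t$ are edges \emph{not} of the form $xw$ or $yw$ — exactly the quantity bounded below in Claim~\ref{claim:delta>8}. I would then argue that to finish any copy of $K_{2,t+1}(t-2)$ in $\mathcal{B}_1$, $P_1$ must use a new pair of cores $\{x',y'\}$ with $|\{x,y\}\cap\{x',y'\}|\le 1$, and completing it requires claiming $\ge t+1$ edges outside $\{xw, yw\}$ (as shown in Claim~\ref{claim:delta>8}); but each such claim is answered by $P_2$ in the \emph{same board} $\mathcal{B}_1$ (via the paired responses of Case~2(b)/(c)), so $P_2$ keeps pace and in particular maintains $e(G_i^{(1)}) - e(F_i^{(1)}) \le 3t-1$ at every round, contradicting the requirement $\Delta(i) = 3t$. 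Hence no such copy can be completed.

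The main obstacle I anticipate is verifying carefully that $P_2$'s responses in Case~2(b) and 2(c) are always \emph{legal} — that the partner edge $yw$ (or $xw$) is genuinely unclaimed when $P_2$ needs it — and that these responses indeed exhaust $P_1$'s threats so that Case~1 never lets $P_1$ slip a win through on a single move. This requires tracking which vertices of $B$ have been ``used up,'' checking the parity bookkeeping ($|B| = 2b-1$ odd, decremented by $2$ in Case~2(c) and by $1$ in Case~2(a)), and ruling out that $P_1$ can create a double threat (two distinct winning completions simultaneously) that $P_2$ cannot parry with one move; the oddness of $|B|$ and the fact that $a \ge 2$ are presumably what make the pairing work out. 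A secondary subtlety is handling the boundary between "still trying to reach $F$" and "$F$ reached": one must confirm that $P_1$ cannot, by deviating before round $2t+1$, reach a winning configuration that dodges the Case~2 trigger entirely — which is where the structural classification of $\mathcal{F}$ (and Figure~\ref{fig:enter-label}) does the work.
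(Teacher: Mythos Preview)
Your plan contains a genuine gap stemming from a misreading of $P_2$'s strategy. You assert that when $P_1$ claims an edge in $\mathcal{B}_1$ \emph{not} of the form $xw$ or $yw$, ``each such claim is answered by $P_2$ in the same board $\mathcal{B}_1$ (via the paired responses of Case~2(b)/(c)).'' This is false: Case~2(b) and Case~2(c) are explicitly conditioned on $P_1$ having just claimed an edge of the form $xw$ or $yw$. If $P_1$ plays any other edge in $\mathcal{B}_1$, Case~2 prescribes nothing, and $P_2$ falls through to Case~3 or Case~4 and plays in $\mathcal{B}_2$. Such rounds therefore \emph{do} raise $\Delta$ by~$1$, and the inequality $\Delta(i)\le 3t-1$ cannot be maintained by the mechanism you describe. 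Indeed, in the branch $G^{(1)}_{2t+1}\ncong F$ for all $F\in\mathcal{F}$, Case~2 never activates at all, $P_2$ plays exclusively in $\mathcal{B}_2$, and the proof of Claim~\ref{claim:delta>8} records $\Delta(3t)=3t$ explicitly --- so your proposed global bound is simply wrong. The mirroring in Case~2 is there only to prevent $P_1$ from completing a copy with the \emph{original} core pair $\{x,y\}$; it does nothing to bound $\Delta$.

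The paper's actual argument is offensive in $\mathcal{B}_2$, not defensive in $\mathcal{B}_1$. From $\Delta(i)=e(F_i^{(2)})-e(G_i^{(2)})=3t$ one gets $e(F_i^{(2)})\ge 3t$, so at some earlier round $i$ one has $e(F_i^{(2)})=3t-1$, i.e.\ $P_2$ has just completed the $K_{2,t}(t-1)$ in $\mathcal{B}_2$. At that moment $P_2$ threatens to win via any of the $t-1$ edges $vx_j$ with $j\in[2t-1]\setminus[t]$, so $G_{i+1}^{(2)}$ must already contain all of them; this forces $\Delta(i+1)\le(3t-1)-(t-1)+1=2t+1$. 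For $\Delta$ to climb back to $3t$ there must be a later round $m'$ in which $P_2$ plays in $\mathcal{B}_2$ while $P_1$ replies in $\mathcal{B}_1$ without the game ending. But every Case~4 move by $P_2$ (claiming $uw$ or $vw$ for an unused $w$) is itself a check, so on the next move $P_2$ completes $K_{2,t+1}(t-2)$ in $\mathcal{B}_2$ and wins --- contradicting the assumption that $P_1$ is the eventual winner. This perpetual-check mechanism in $\mathcal{B}_2$ is the missing idea in your plan.
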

\begin{poc}
Suppose on the contrary that $P_{1}$ finally claims a copy of $K_{2,t+1}(t-2)$ in $\mathcal{B}_{1}$. By Claim~\ref{claim:delta>8}, there exists some $i$ such that $e(F_{i}^{(2)})=3t-1$.  
By $P_{2}$'s strategy, $F_{i}^{(2)}\cong K_{2,t}(t-1)$, without loss of generality, we further assume that $E(F_{i}^{(2)})=\{ux_{s},vx_{k},s\in [2t-1],k\in [t]\}$. Note that $e(G^{(2)}_{i}) \le e(F_{i}^{(2)})-1 =3t-2$, and $\Delta(j) \le 3t-1$ for all $j\le i$. By Claim~\ref{claim:delta>8}, $P_1$ can not win in round $i+1$.
This means that $G^{(2)}_{i+1}$ must contain $vx_{j}$ for all $j\in [2t-1] \setminus [t]$, otherwise $P_2$ will win in round $i+1$.
This implies that $$\Delta(i+1) \le(3t-1) -(t-1)+1= 2t+1.$$ 
By Claim~\ref{claim:delta>8}, there exists some $m$ such that $\Delta(m)=3t$. This means there exists a rounds $m'$ after round $i$ such that $P_2$ claims an edge in $\mathcal{B}_2$ and $P_1$ claims an edge in $\mathcal{B}_1$ in round $m'+1$ such that the game is not over,
which implies that $P_2$ claimed a copy of $K_{2,t+1}(t-2)$ in round $m'+1$, a contradiction.
\end{poc}

\begin{claim}\label{claim:P1CannotWin2}
     $P_{1}$ can not claim a copy of $K_{2,t+1}(t-2)$ in $\mathcal{B}_{2}$.   
\end{claim}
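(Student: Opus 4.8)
The plan is to show that $P_2$'s strategy prevents $P_1$ from ever completing a copy of $K_{2,t+1}(t-2)$ on board $\mathcal{B}_2$, which is the board where $P_2$ is actively building structure. The key point is that $P_2$ controls almost all of the edge-claiming on $\mathcal{B}_2$ (at most one of $P_1$'s moves per round goes to $\mathcal{B}_2$ after the opening on $\mathcal{B}_1$), so $P_1$ can only be building a copy of $K_{2,t+1}(t-2)$ on $\mathcal{B}_2$ very slowly, and meanwhile $P_2$ is racing to set up the ``key'' graph $K_{2,t}(t-1)$ (Figure~\ref{fig:KeyforP2}) that puts $P_1$ in check or forces responses.

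First I would observe that if $P_1$ is to complete $K_{2,t+1}(t-2)$ on $\mathcal{B}_2$, then at some round $P_1$ must have spent a substantial number of moves on $\mathcal{B}_2$ — at least $3t - (\text{whatever $P_1$ can steal from $P_2$'s forced responses})$ — but since $P_2$ only plays on $\mathcal{B}_2$ when Cases~3–4 are active (and defends on $\mathcal{B}_1$ in Case~2), I would track the invariant that whenever $P_1$ has a near-copy of $K_{2,t+1}(t-2)$ on $\mathcal{B}_2$, $P_2$ has already completed, or is one move from completing, $K_{2,t}(t-1)$ there, forcing a check that $P_1$ must answer, thereby losing tempo. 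Concretely: by Case~3, $P_2$ builds $K_{1,2t-1}$ centered at an unused vertex $u$ and then claims $t$ of the edges $vx_j$; the priority rule in Case~3(b) ensures that $P_2$'s choice of which $x_j$ to attach to $v$ blocks $P_1$ from using $P_1$'s own high-degree vertex (or, if $P_1$ is building a star, from extending that star) — this is the mechanism that stops $P_1$ from piggybacking on $P_2$'s $u$-star to finish its own $K_{2,t+1}(t-2)$. Then Case~4 converts the completed $K_{2,t}(t-1)$ into an actual check (choosing $vw$ or $uw$ depending on whether $P_1$ already threatens a $K_{2,t}$ with core $u$), so that from that point on $P_1$ is permanently on the defensive on $\mathcal{B}_2$ and cannot gain the tempo needed to place the last few edges of its target.

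The skeleton of steps I would carry out: (i) quantify how many edges $P_1$ needs on $\mathcal{B}_2$ to host $K_{2,t+1}(t-2)$ and how many of those cannot coincide with edges $P_2$ forces via Case~2/Case~4 responses — mirroring the $3t-4 \ge t+1$ tempo count from Claim~\ref{claim:delta>8}; (ii) show $P_2$ reaches the $K_{2,t}(t-1)$ configuration in $3t-1$ of its own moves (this is exactly Claim~4.5, already proven) provided Cases~1,2 don't intervene — and if Case~1 or 2 does intervene, then $P_1$ is either beaten on $\mathcal{B}_1$ or blocked there by Claim~\ref{claim:P1CannotWin1}, so those branches are handled; (iii) verify that once $P_2$ holds $K_{2,t}(t-1)$, the Case~4 move genuinely is a check (check that $\{u,v,x_1,\dots,x_{2t-1},w\}$ together with one more edge contains $K_{2,t+1}(t-2)$ in both the ``$P_1$ has a $K_{2,t}$ core at $u$'' and ``doesn't'' subcases); (iv) conclude that $P_1$, now forced to respond to checks indefinitely on $\mathcal{B}_2$ while also unable to win on $\mathcal{B}_1$ by Claim~\ref{claim:P1CannotWin1}, never completes the target, contradicting the assumption.

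The main obstacle I anticipate is step (iii) together with the bookkeeping of priorities in Case~3(b): I need to rule out the scenario where $P_1$ cleverly shares vertices between its nascent copy of $K_{2,t+1}(t-2)$ and $P_2$'s $u$-star, so that when $P_2$ finally completes $K_{2,t}(t-1)$ and plays the Case~4 check, $P_1$'s response also advances $P_1$'s own target — potentially letting $P_1$ win a move later. Resolving this requires showing the priority rule always leaves $P_1$'s high-degree vertex (or the leaf of $P_1$'s star) without the $v$-edge it needs, so $P_1$'s copy is still missing at least two independent edges after answering the check; this is where a careful case split on the structure of $P_1$'s graph on $\mathcal{B}_2$ at the moment Case~4 triggers will be needed, and it is the delicate heart of the argument. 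The rest is tempo accounting analogous to the previous claims.
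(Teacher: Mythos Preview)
Your plan is on the right track and follows essentially the paper's argument: $P_2$ stays ahead on $\mathcal{B}_2$, completes $K_{2,t}(t-1)$, and then checks perpetually via Case~4. However, two points in your outline are misframed relative to what actually needs proving.

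First, your step~(i) overcomplicates the tempo count. There is no analogue of the $3t-4\ge t+1$ bookkeeping from Claim~\ref{claim:delta>8} here. The only invariant needed is $e(G_i^{(2)})\le e(F_i^{(2)})-1$, which holds because $P_1$'s first move is on $\mathcal{B}_1$ while $P_2$ answers on $\mathcal{B}_2$, and thereafter $P_2$ plays on $\mathcal{B}_1$ only in Case~2, each such instance being a direct response to a $P_1$-move on $\mathcal{B}_1$. This already forces $P_2$ to reach $K_{2,t}(t-1)$ (i.e.\ $e(F_i^{(2)})=3t-1$) before $P_1$ could possibly finish.

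Second, and more importantly, your step~(iii) has the emphasis inverted. That the Case~4 move is a check for $P_2$ is immediate. The delicate lemma---and this is where the priority rule of Case~3(b) is actually used---is that \emph{$P_2$ is not in check} at the first round $j$ with $e(F_j^{(2)})=3t$. The strategy contains no ``defend'' clause, so if $P_1$ were one move from winning at that instant, $P_2$ would simply lose. The paper rules this out by noting that if $G_j^{(2)}$ were $K_{2,t+1}(t-2)$ minus an edge, then (since $v$ has degree at most $t$ in $G_j^{(2)}$, being unused when $P_2$ chose it) either $v$ is a core of $G_j^{(2)}$, forcing $P_1$'s first $2t-1$ moves on $\mathcal{B}_2$ to be a star and then the star-case priority rule contradicts the structure; or $t=3$ and two of the $x_j$ are cores, whereupon the non-star priority rule forces $P_2$ to have claimed a blocking $vx_j$. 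Your ``piggybacking'' concern is adjacent to this but is really about the \emph{next} phase: once $P_2$ is not in check and plays Case~4, one must also check that $P_1$'s forced response (say $vw$ when $P_2$ plays $uw$) is useless for $P_1$. That follows from the separate observation that $u,v$ cannot both lie in a $K_{2,t}$ of $G_j^{(2)}$ (as $P_2$ holds all of $vx_1,\dots,vx_t$), so at least one of them is not a core for $P_1$, and the branch of Case~4 is chosen accordingly.

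So: keep your skeleton, but replace step~(i) by the one-line invariant, and split your ``main obstacle'' into the two genuinely separate sub-claims above---$P_2$ not in check at round $j$ (priority rule), and the forced response being useless for $P_1$ thereafter (core analysis of $u,v$).
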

\begin{poc}
Suppose on the contrary that $P_{1}$ finally claims a copy of $K_{2,t+1}(t-2)$ in $\mathcal{B}_{2}$. According to the $P_{2}$'s strategy, we have $e(G_{i}^{(2)}) \le e(F_{i}^{(2)})-1$, which implies that $e(G_{i+1}^{(2)}) \le e(F_{i}^{(2)})$. Therefore, there exists some $i$ such that $e(F_{i}^{(2)})=3t-1$ and $F_{i}^{(2)}\cong K_{2,t}(t-1)$. 
Similarly, we assume that $E(F_{i}^{(2)})=\{ux_{s},vx_{k},s\in [2t-1],k\in [t]\}$ and  $G^{(2)}_{i+1}$ must contain $vx_{s}$ for all $s\in [2t-1] \setminus [t] $.
Since $e(K_{2,t+1}(t-2)) =3t$, there also exists some $j$ such that $e(F_{j}^{(2)})=3t$. 

We claim that $P_2$ can not be in check in round $j$.
Suppose that $P_2$ is in check in round $j$, then $G^{(2)}_j$ is isomorphic to the graph obtained by deleting an edge from $K_{2,t+1}(t-2)$. Note that, $e(G_j^{(2)}) = 3t-1$ and $v$ has degree $t-1$ or $t$ in $G_j^{(2)}$.
If $v$ is a core of $G^{(2)}_j$, then $G^{(2)}_j \cong K_{2,t}(t-1)$ as $d(v) \le t$. Let $u'$ be another core of $G^{(2)}_j$.
Since the vertex $v$ remains unused until $P_2$ picks it,
we conclude that the first $2t-1$ moves of $P_1$ on $\mathcal{B}_2$ form a star centered at $u'$.
Since $P_2$ claims $t$ of the $2t-1$ edges $vx_1,vx_2,\dots,vx_{2t-1}$ with a preference for edges $vx_j$, where $x_j$ has degree one in the graph claimed by $P_1$ in $\mathcal{B}_2$, $t-1$ of the edges $u'x_1,u'x_2,\dots u'x_t$ will belong to $G_{j+1}^{(2)}$.
Since the edges $vx_1,vx_2,\dots vx_t$ have been claimed by $P_2$, we derive that $P_2$ is not in check in round $j$.
If $v$ is not a core of $G^{(2)}_j$, then the degree of $v$ in $G^{(2)}_j$ is 2 and $t=3$, which implies that $x_4$ and $x_5$ are the two cores of  $G^{(2)}_j$. Since at the moment $P_2$ picks $v$, $P_1$ does not claim a star in $\mathcal{B}_2$, by the strategy of $P_2$ for claiming the $3$ edges from $vx_1,vx_2,\dots,vx_5$, one of $vx_4$ and $vx_5$ will be claimed by $P_2$, which leads to a contradiction.

Since \( e(G^{(2)}_j) \leq 3t-1 \) and \( u, v \) do not belong to the same \( K_{2,t} \) in \( G^{(2)}_j \), we deduce that at least one of the vertices \( u \) or \( v \) does not belong to any \( K_{2,t} \) in \( G^{(2)}_j \). Without loss of generality, assume that \( v \) does not belong to any \( K_{2,t} \) in \( G^{(2)}_j \). in round \( j+1 \), \( P_2 \) claims the edge \( uw \) for some unused vertex \( w \). Since \( P_2 \) is not in check and $P_1$ is in check, \( P_1 \) must respond by claiming \( vw \) in the next round. In each subsequent round, \( P_2 \) will continue to claim an edge \( uw' \) for some unused vertex \( w' \), forcing \( P_1 \) to claim \( vw' \) in response. This process ensures that \( P_1 \) cannot claim a copy of $K_{2,t+1}(t-2)$, leading to a contradiction.
\end{poc}

By~\cref{claim:P1CannotWin1} and~\cref{claim:P1CannotWin2}, we can see that $P_1$ cannot win within a finite number of moves. This finishes the proof.

\section{$P_{1}$ always wins for various games: Proof of~\cref{thm:K23}}
In this section, we prove the results in~\cref{thm:K23} separately.

\subsection{The game $R(K_{n},C_{\ell})$}
We first demonstrate that \( P_1 \) can claim a path of length \( \ell - 2 \) in such a way that \( P_2 \) is unable to form a path of length \( \ell - 2 \) with the same endpoints after \( \ell - 2 \) moves. To see this,
after $\ell -3$ moves, $P_1$ can claim a path of length $\ell-3$, denoted by $v_{0}v_{1}\cdots v_{\ell-3}$.
If $P_2$ does not claim a path of length $\ell-3$ after $\ell-3$ moves, then $P_{1}$ can pick a new vertex $v$, which is not adjacent to any edge claimed by $P_{2}$. Regardless of $P_{2}$'s moves on the $(\ell-2)$-th round, $P_{1}$ can claim a path $v_{0}v_{1}\cdots v_{\ell-3}v$, but $P_{2}$ cannot claim a path of length $\ell-2$ with endpoints $v_{0}$ and $v$. Otherwise, suppose that $P_{2}$ claims a path of length $\ell-3$ with endpoints $\{x,y\}$ after $\ell-3$ moves, 
if $\{x,y\}\ne \{v_0,v_{\ell-3}\}$, we can assume $v_0\notin \{x,y\}$, then $P_1$ can pick a new vertex $v$ which does not appear in any claimed edges and then claim $v_{\ell-3}v$, otherwise suppose that $\{x,y\}= \{v_0,v_{\ell-3}\}$, then $P_1$ can do the same operation.
Observe that in both of the above cases, $P_2$ has no way to obtain a path of length $\ell-2$ with the same endpoints $\{v_{0},v\}$.

Therefore, $P_1$ can first claim a path of length $\ell-2$, denoted by $v_{0}v_{1}\cdots v_{\ell-2}$ after $\ell-2$ moves, so that $P_{2}$ cannot claim a path of length $\ell-2$ with the same endpoints $\{v_{0},v_{\ell-2}\}$. Without loss of generality, we can assume that $P_{2}$ does not claim any path of length $\ell-2$ with $v_{\ell-2}$ being one endpoint. $P_1$ can pick a new vertex $x$ which does not appear in any claimed edges and then claim $v_{0}x$ in his $(\ell-1)$-th round, and $P_{2}$ has to claim $v_{\ell-2}x$. Furthermore, $P_1$ can pick another new vertex $y$ which does not appear in any claimed edges and then claim $v_{0}y$ in his $\ell$-th round, and $P_{2}$ has to claim $v_{\ell-2}y$.  As $P_{2}$ does not claim a path of length $\ell-2$ with $v_{\ell-2}$ being one endpoint after $\ell-2$ moves, we can see $P_{2}$ does not claim any path of length $\ell-1$ after claiming $v_{\ell-2}x$ and $v_{\ell-2}y$. 

Finally, $P_{1}$ can pick a new vertex $z$ which does not appear in any claimed edges and then claim $v_{\ell-3}z$ in his $(\ell+1)$-th round, and $P_{2}$ has to claim both of $xz$ and $yz$, which is impossible. That means $P_{1}$ can win the game using $\ell+2$ moves.

\subsection{The game $R(K_{n}\sqcup K_{n}, K_{2,3}$)}
For a given positive integer \( i \), let \( e_{i}^{(1)} \) (resp. \( e_{i}^{(2)} \)) denote the edge claimed by \( P_1 \) (resp. \( P_2 \)) on the \( i \)-th move. Player \( P_1 \)'s initial objective is to claim a 4-cycle with favorable properties. As demonstrated in the following propositions, if \( P_1 \) succeeds in achieving this, they can secure a swift victory. This strategy serves as the foundation for \( P_1 \)'s overall approach to the game.

\begin{claim}\label{claim:C4}
Assume we are now in the $i$-th step where $i\in\mathbb{N}$, $P_{1}$ can win the game using at most $6$ more moves if the following holds.
$P_{1}$ has claimed a copy of $C_{4}:=abcd$ such that
\begin{enumerate}
    \item[\textup{(1)}] There is some edge $xy\in\{ab,bc,cd,ad\}$ such that $P_{2}$ has not claimed a path of length $2$ between $x$ and $y$.
    \item[\textup{(2)}] Furthermore, neither \( x \) nor \( y \) is contained in any copy of \( C_4 \) claimed by \( P_2 \).
    \item[\textup{(3)}] $P_{1}$ is not in check.
\end{enumerate}

\end{claim}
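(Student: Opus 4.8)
The plan is to pass to a single board and exhibit an explicit short forcing sequence for $P_1$ that ends in a double threat $P_2$ cannot parry. Since $K_{2,3}$ is connected, any copy of it lies inside one of the two boards; say the $C_4=abcd$ (with edges $ab,bc,cd,da$) sits on one fixed board, and all of $P_1$'s subsequent moves below are played on that same board. Relabelling the $C_4$, we may assume the special edge of $(1)$--$(2)$ is $xy=ab$; then $a,b$ lie in different colour classes of the $C_4$, the class-mate of $a$ being $c$ and that of $b$ being $d$. If $P_1$ already has a move completing a copy of $K_{2,3}$, it simply plays it, so assume not.

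The key observation is the following. Suppose $P_1$ owns the $C_4=abcd$ \emph{and} a second $4$-cycle $C_4'=cfgd$ on a new pair of vertices $f,g$ (so $P_1$ owns $cf,fg,gd$ in addition to the shared edge $cd$). Then, claiming the pendant edge $ch$ at $c$ for a brand new vertex $h$ simultaneously produces the two winning threats $ah$ — completing $K_{2,3}$ on hubs $\{a,c\}$ with common neighbours $b,d,h$ — and $gh$ — completing $K_{2,3}$ on hubs $\{c,g\}$ with common neighbours $f,d,h$. As $h$ is new, $P_2$ cannot have pre-empted either threat, and they are edges at distinct vertices $a,g$; so $P_2$ can kill at most one and $P_1$ wins the move after. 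Thus it suffices for $P_1$ to assemble $C_4'$ quickly and safely. This is done by a forcing sequence. \emph{Move $1$:} $P_1$ claims $cf$ for a new vertex $f$; the graph $C_4(abcd)+cf$ is a copy of $K_{2,3}-af$, whose \emph{unique} completing edge is $af$, so (using $(3)$, and that ignoring a check loses at once) $P_2$ is forced to answer $af$. \emph{Move $2$:} $P_1$ claims $dg$ for a new vertex $g$; similarly $C_4(abcd)+dg=K_{2,3}-bg$ forces the answer $bg$. \emph{Move $3$:} $P_1$ claims $fg$, completing $C_4'=cfgd$; this move is not a check, so $P_2$ obtains one free move. \emph{Move $4$:} $P_1$ claims $ch$ and forks as above; \emph{move $5$} wins — five moves in the clean line.

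Conditions $(1)$ and $(2)$ are exactly what makes the sequence safe. The only edges $P_2$ is ever forced to take are $af$, $bg$ (and later one of $ah,gh$): each is incident to $x=a$ or $y=b$ and to a vertex that is new at the time of the forced move, hence of degree one in $P_2$'s graph. Such an edge can never be an edge of a copy of $K_{2,3}$ inside $P_2$'s graph: its new endpoint cannot be a degree-$2$ vertex of that copy, and its endpoint $a$ (resp. $b$) cannot be a degree-$3$ vertex of a $P_2$-copy of $K_{2,3}$ without lying on a $4$-cycle of $P_2$ through a path of length two between $a$ and $b$ — both forbidden by $(1)$--$(2)$, and neither created by adding $af,bg$ since $f,g$ have degree one in $P_2$. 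Consequently the forced answers never hand $P_2$ a win and never build a copy of $K_{2,3}$ minus an edge for $P_2$; in particular $P_2$ is genuinely forced at moves $1$ and $2$, and remains not-in-a-winning-position when the single free move of move $3$ arrives, so that free move cannot win either. If that free move instead creates a threat, $P_1$ parries it with one extra move and then resumes the plan, finishing the fork and the win within two further moves — at most six in total. (A $P_2$ counter-attack on the other board is handled the same way: $P_1$'s continuous checking leaves $P_2$ no tempo to build a competing threat there from scratch.)

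The part needing the most care — the main obstacle — is this last bookkeeping: one must verify that $P_2$ cannot insert more than one \emph{dangerous} free move, so that at most one parry is ever needed and the bound $6$ holds, and one must check the few ways $P_2$'s pre-existing edges incident to $a,b$ or $c,d$ could spoil the uniqueness of a forced answer or the cleanness of the final fork. Everything else reduces to the two structural facts isolated above: that $C_4+(\text{pendant})$ is $K_{2,3}$ minus a single, forced edge, and that a vertex lying on two $P_1$-controlled $4$-cycles through it yields a pendant move with two independent winning threats.
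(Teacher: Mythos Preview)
Your third move $fg$ is not a check, and this breaks the argument. After $cf,dg,fg$ (with $P_2$ having been forced to take $af,bg$), every single-edge $K_{2,3}$-completion for $P_1$ goes through one of the hub pairs $\{a,c\},\{b,d\},\{c,g\},\{d,f\}$, and the missing edge is respectively $af,bg,gb,fa$ --- all already owned by $P_2$. So $P_2$ genuinely gets a free tempo here. Now hypotheses (1)--(3) do \emph{not} forbid $P_2$ from owning a $C_4$, only one through $a$ or $b$. If $P_2$ started with a $4$-cycle $pqrs$ disjoint from $\{a,b\}$, then on this free move $P_2$ plays the pendant $pt$ at a new vertex $t$, creating the single threat $rt$; your parry $rt$ is an edge disjoint from $P_1$'s structure and is itself not a check, so $P_2$ plays $pu$ for another new $u$, and so on forever. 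Thus ``at most one parry'' is simply false in general, and you never reach the forking move $ch$. You flag this bookkeeping as ``the main obstacle'' but do not (and cannot, with this move order) close it.

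The paper's proof sidesteps the issue by never surrendering the initiative: $P_1$ plays $cy_1,cy_2,cy_3,dy_1,dy_2$ with all $y_i$ unused, and each of these five moves is itself a check (threatening $ay_i$ or $by_i$), so $P_2$'s replies are forced throughout. After the fifth move $P_1$ has a double threat on hub pairs $\{b,d\}$ and $\{c,d\}$, and wins on move six. The essential difference is that the paper builds the second structure \emph{through} continued checks rather than via a quiet consolidating move like your $fg$.
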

\begin{poc}
Without loss of generality, we assume that $xy=ab$. From rounds $i+1$ to $i+3$, $P_1$ could claim $cy_i$ for $i=[3]$ consecutively, for three unused vertices $y_i$. By property (3), $P_2$ must claim $ay_i$ for $i=[3]$ to prevent $P_1$ from claiming a $K_{2,3}$. From rounds $i+4$ to $i+5$, $P_1$ claims $dy_i$ for $i=[2]$ consecutively. In response, $P_2$ must claim $by_i$ for $i=[2]$ to prevent $P_1$ from claiming a copy of $K_{2,3}$. However, in round $i+6$, $P_1$ can claim a copy of $K_{2,3}$, namely $\{cy_1,cy_2,cy_3, dy_1,dy_2, dy_3\}$. Note that in this process, all of the vertices $y_i$ are new, which yields that after the $(i+4)$-th move, $e^{(2)}{(j)}$ cannot be contained in any copy of $K_{2,2}(1)$ claimed by $P_2$ for $i+1\leq j\leq i+4$. As a result, to claim a copy of $K_{2,3}$, $P_2$ needs to claim at least $2$ more edges. Therefore, $P_2$ can not prevent $P_i$'s victory in round $i+6$. 
\end{poc}

\begin{claim}\label{claim:C4+}
Assume we are now in the $i$-th step where $i\in\mathbb{N}$, $P_{1}$ can win the game using at most $5$ more moves if the following holds:
$P_{1}$ has claimed a copy of $K_{2,2}(1)$ which consists of a $C_4:=axby,$ and an edge $ac$ such that
\begin{enumerate}
    \item[\textup{(1)}] Neither \( x \) nor \( y \) is contained in any copy of \( C_4 \) claimed by \( P_2 \).
    \item[\textup{(2)}] $P_{1}$ is not in check.
\end{enumerate}
Moreover, if $P_2$ is $C_4$-free, then conditions (1) and (2) naturally hold.
\end{claim}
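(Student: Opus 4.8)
The plan is to reduce this claim to the previous one, Claim~\ref{claim:C4}, by having $P_1$ make one preparatory move that converts the $K_{2,2}(1)$ into a configuration satisfying the hypotheses of Claim~\ref{claim:C4}, losing only one move in the budget ($5 = 6 - 1$, but see below — actually we want the preparatory move to cost one round and then invoke a $4$-more-moves version). Concretely, $P_1$ currently holds the $C_4 = axby$ together with the pendant edge $ac$. The natural first move is for $P_1$ to claim $bc$ in round $i+1$, producing a $C_4$ on $\{a,c,b,?\}$... wait — more carefully, the edges $ax,xb,by,ya,ac$ are $P_1$'s, so claiming $bc$ gives $P_1$ the $4$-cycle $acbx$ (edges $ac,cb,bx,xa$) as well as $acby$ (edges $ac,cb,by,ya$), and now the edge $xy$ is no longer available to us as a "free" edge but $ac$ is a genuine edge of a $P_1$-$C_4$. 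I would then check that the edge $xa$ (or $xb$) plays the role of the edge "$xy$" in Claim~\ref{claim:C4}: namely that $P_2$ has not claimed a path of length $2$ between $x$ and $a$, and that neither $x$ nor $a$ lies in a $P_2$-copy of $C_4$. The vertex $x$ is safe by hypothesis~(1); the vertex $a$ may not be, so one must instead use the edge $xb$ and the pair $\{x,b\}$, or argue that $P_2$ cannot have created such an obstruction at $a$ in a single extra round without putting $P_1$ in check or winning — and if $P_2$'s reply to $bc$ does create an obstruction, then $P_2$ has "wasted" a move not defending, and $P_1$ threatens immediately. This case analysis on $P_2$'s single response is the technical heart of the argument.

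After that preparatory exchange, $P_1$ is in the situation of Claim~\ref{claim:C4} (a $P_1$-owned $C_4$ with a good edge and $P_1$ not in check), and Claim~\ref{claim:C4} finishes the game in at most $6$ further moves; since we have already spent one move, this would give $7$ total, which is too many. So the actual plan must be sharper: I would reprove the endgame of Claim~\ref{claim:C4} directly here, exploiting the \emph{extra} edge $ac$ to save one round. Indeed, with the pendant $ac$ already in place, after claiming $bc$ in round $i+1$ the graph $\{ac,bc,ax,bx,ay,by\}$ restricted appropriately already contains $K_{2,2}(1)$ \emph{with an extra edge}, so $P_1$ needs only to build up the common neighbourhood of one more pair. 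Following the template of Claim~\ref{claim:C4}'s proof: $P_1$ plays $cy_1,cy_2$ (rounds $i+2,i+3$) at two fresh vertices, $P_2$ is forced to answer $by_1,by_2$ (else $P_1$ completes $K_{2,3}$ on the pair $\{b,c\}$ using $bx,by,cy_1,cy_2$ — wait, $bx,cy_1,cy_2$ plus $by$ gives three common neighbours $x,y_1,y_2$ of $b$ and... no: $c$ is adjacent to $y_1,y_2$ but not $x$; I need to recount which pair has the large common neighbourhood). I would set this up so that the pendant edge $ac$ merges with one newly built star to give the third common neighbour "for free", so that only \emph{two} rounds of building plus one finishing round are needed after the preparatory move: $1 + 2 + 2 + 1 = $ too many again unless the forcing is tighter.

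Given the bookkeeping is delicate, the cleanest route I would actually pursue is: skip the analogy and run a self-contained forcing sequence of length $5$. In round $i+1$ claim $cz$ at a fresh vertex $z$; since $P_1$ is not in check and this threatens nothing yet, but then in rounds $i+2,\dots$ build two vertex-disjoint threats simultaneously using the already-present edges $ax,ay,bx,by,ac$. The key structural fact to exploit is that $axby$ plus $ac$ means $a$ has three neighbours $x,y,c$ and $b$ has two neighbours $x,y$; $P_1$ wants $b$ (or $c$) to reach three common neighbours with some partner. Playing $cx$ (round $i+1$) makes $c$ adjacent to $a,x$ and now $\{a,?\}$... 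I would methodically enumerate: the winning move count of $5$ strongly suggests the intended sequence is "claim the edge completing $C_4$ into a book $K_{2,2}(1)$-plus-edge on a \emph{new} pair, then $3$ forcing moves, then win", i.e.\ $1+3+1$. The main obstacle, and where I would spend the most care, is verifying condition~(2)/(3) propagate: namely that $P_1$ never becomes "in check" during this forced sequence and that $P_2$'s forced replies are genuinely forced — this requires checking that at each of $P_1$'s threats, $P_2$ has a unique blocking move, which in turn uses hypothesis~(1) (freshness of $x,y$) to rule out $P_2$ blocking "two threats at once" via a pre-existing $C_4$. The final sentence of the statement ("if $P_2$ is $C_4$-free then (1),(2) hold") is immediate: $C_4$-freeness gives (1) trivially, and a player with no $C_4$ and hence no $K_{2,3}$-threat cannot have put $P_1$ in check in a way that matters, giving (2).
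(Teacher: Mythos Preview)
Your proposal does not arrive at a proof: you cycle through several candidate sequences (claim $bc$ and invoke Claim~\ref{claim:C4}; claim $cz$ or $cx$; a vague ``$1+3+1$'' plan), correctly reject the first on move count, but never settle on a concrete five-move sequence, let alone verify that $P_2$'s replies are forced. The repeated miscounts of which pair acquires common neighbours show that the combinatorics of the configuration has not been pinned down.

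The idea you are missing is that the pendant edge $ac$ should be used at the \emph{end}, not the beginning, and that no new $C_4$ needs to be built at all. The paper's proof runs as follows. Choose (say) $x$ not in any $P_2$-owned $C_4$. In rounds $i+1,i+2,i+3$, $P_1$ plays $yz_1,yz_2,yz_3$ at fresh vertices $z_1,z_2,z_3$. Each such move threatens $xz_j$, since $x$ and $y$ already share the common neighbours $a,b$ and $xz_j$ would give them a third; because $P_1$ is not in check and $x$ lies in no $P_2$-$C_4$ (so the forced replies $xz_j$ at fresh vertices cannot put $P_1$ in check either), $P_2$ must answer $xz_1,xz_2,xz_3$. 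Now in round $i+4$, $P_1$ plays $cz_1$. At this point $c$ and $y$ share the common neighbours $a$ (from the pendant $ac$ and the $C_4$-edge $ay$) and $z_1$; so $cz_2$ and $cz_3$ are simultaneous winning threats, and $P_1$ takes whichever $P_2$ leaves open in round $i+5$. Your attempts all tried to bring $c$ into a $C_4$ first and then force; the actual argument forces first on the pair $\{x,y\}$ and only then cashes in the pendant to create the double threat on $\{c,y\}$.
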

\begin{proof}
Without loss of generality, we assume that $x$ is not contained in any copy of $C_4$ claimed by $P_2$. Form rounds $i+1$ to $i+3$, $P_1$ could claim $yz_i$ for $i\in [3]$ consecutively, for three unused vertices $z_i$. Since $P_1$ is not in check and $x$ is not contained in any copy of $C_4$, we know that $P_2$ must claim $xz_i$ for $\in [3]$. In round $i+4$ $P_1$ claims $cz_1$. Clearly, $P_1$ is still not in check, so $P_1$ will get a $K_{2,3}$ with $c,y$ as the core in the next round by claiming one of $cz_2,cz_3$.
\end{proof}
\begin{figure}[htbp]
    \centering
    \begin{subfigure}[b]{0.4\textwidth}
    \centering 
    \begin{tikzpicture}[scale=0.3]
        % Vertices  
        \node (A) at (0,4) {$a$};
        \node (B) at (0,-4) {$b$};
        \node (C) at (4,4) {$c$};
        \node (D) at (4,-4) {$d$};
        \node (Y1) at (8,0) {$y_1$};
        \node (Y2) at (12,0) {$y_2$};
        \node (Y3) at (16,0) {$y_3$};
        % Edges
        \foreach \from in {A,B}
        \foreach \to in {Y1,Y2}
        \draw [red, thick] (\from) -- (\to);
        \draw [red, thick] (A)--(Y3);

        \foreach \from in {C,D}
        \foreach \to in {Y1,Y2,Y3}
        \draw [black, thick] (\from) -- (\to);
        \draw [black, thick](A) -- (B)--(C)--(D)--(A);
    \end{tikzpicture}
    \caption{The graph for Claim 3.1}
    \label{fig:KeyForP2}
    \end{subfigure}
    \begin{subfigure}[b]{0.4\textwidth}
    \centering 
    \begin{tikzpicture}[scale=0.3]
        % Vertices  
        \node (A) at (2,4) {$a$};
        \node (B) at (2,-4) {$b$};
        \node (X) at (0,0) {$x$};
        \node (Y) at (4,0) {$y$};
        \node (C) at (12,4) {$c$};
        \node (Z1) at (8,-4) {$z_1$};
        \node (Z2) at (12,-4) {$z_2$};
        \node (Z3) at (16,-4) {$z_3$};
        % Edges
        \foreach \from in {A,B}
        \foreach \to in {X,Y}
        \draw [black, thick] (\from) -- (\to);
        \draw [black, thick] (A) -- (C);
        \draw [black, thick] (Z2)--(Y) -- (Z1);
        \draw [black, thick] (Z3)--(Y);
        \draw [red, thick] (Z2)--(X) -- (Z1);
        \draw [red, thick] (Z3)--(X);
        \draw [black, thick] (C) -- (Z1);
    \end{tikzpicture}
    \caption{The graph for Claim 3.2}
    \label{fig:KeyForP2}
    \end{subfigure}
\end{figure}    

 Based on~\cref{claim:C4} and~\cref{claim:C4+}, we then describe the early strategy of $P_{1}$ as following picture. $P_{1}$ first picks an arbitrary edge, then $P_{1}$ checks whether $e^{(2)}(1)$ and $e^{(1)}(1)$ share a common vertex. 
 Note that in some cases, certain choices are isomorphic. We list only one representative in such cases. For example, in the second round of choices for $P_2$ in the first case, if $cx_1$ is selected, choosing $cx_2$, $y_1x_1$ or $y_1x_2$ would result in isomorphic graphs.
 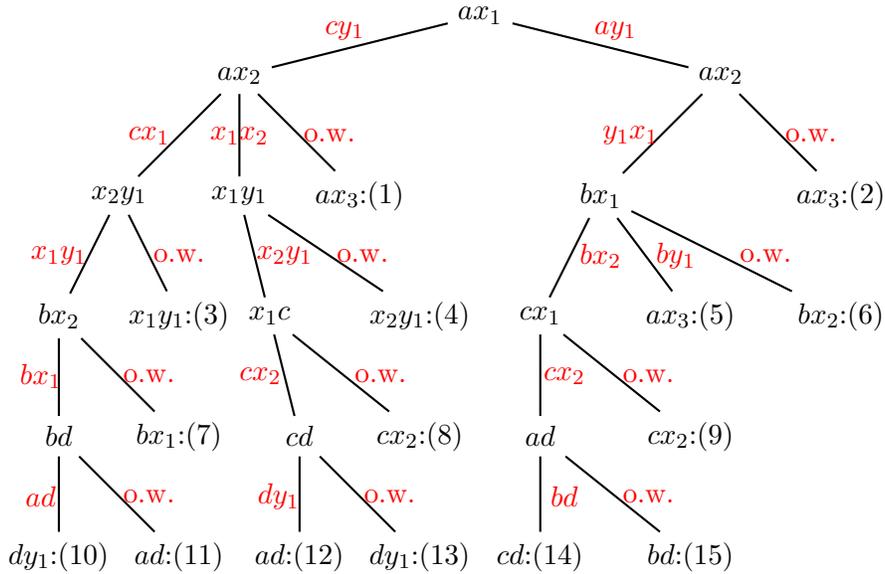
\begin{figure}[htbp]
    \centering 
    \begin{tikzpicture}[scale=0.4]
        % Vertices  
        \node (A1) at (0,-2) {$ax_1$};
        \node (B1) at (-8,-4) {$ax_2$};
        \node (B2) at (8,-4) {$ax_2$};
        \node (C1) at (-12,-8) {$x_2y_1$};
        \node (C2) at (-8,-8) {$x_1y_1$};
        \node (C3) at (-4,-8) {$ax_3$:(1)};
        \node (C4) at (4,-8) {$bx_1$};
        \node (C5) at (12,-8) {$ax_3$:(2)};

        \node(D1) at (-14,-12) {$bx_2$};
        \node(D2) at (-10,-12) {$x_1y_1$:(3)};
        \node(D3) at (-7,-12) {$x_1c$};
        \node(D4) at (-2,-12) {$x_2y_1$:(4)};
        \node(D5) at (2,-12) {$cx_1$};
        \node(D6) at (7,-12) {$ax_3$:(5)};
        \node(D7) at (12,-12) {$bx_2$:(6)};

        \node(E1) at (-14,-16) {$bd$};
        \node(E2) at (-10,-16) {$bx_1$:(7)};
        \node(E3) at (-6,-16) {$cd$};
        \node(E4) at (-2,-16) {$cx_2$:(8)};
        \node(E5) at (2,-16) {$ad$};
        \node(E6) at (7,-16) {$cx_2$:(9)};
 
        \node(F1) at (-14,-20) {$dy_1$:(10)};
        \node(F2) at (-10,-20) {$ad$:(11)};
        \node(F3) at (-6,-20) {$ad$:(12)};
        \node(F4) at (-2,-20) {$dy_1$:(13)};
        \node(F5) at (2,-20) {$cd$:(14)};
        \node(F6) at (7,-20) {$bd$:(15)};
        
    \draw [black, thick] (A1) -- (B1);
    \node at (-4.5,-2.5) {\textcolor{red}{$cy_1$}};
    \draw [black, thick] (A1) -- (B2);
    \node at (4.5,-2.5) {\textcolor{red}{$ay_1$}};

    \draw [black, thick] (B1) -- (C1);
    \node at (-11,-6) {\textcolor{red}{$cx_1$}};

    \draw [black, thick] (B1) -- (C2);
    \node at (-8,-6) {\textcolor{red}{$x_1x_2$}};

    \draw [black, thick] (B1) -- (C3);
    \node at (-5,-6) {\textcolor{red}{o.w.}};

    \draw [black, thick] (B2) -- (C4);
    \node at (5,-6) {\textcolor{red}{$y_1x_1$}};

    \draw [black, thick] (B2) -- (C5);
    \node at (11,-6) {\textcolor{red}{o.w.}};

    \draw [black, thick] (C1) -- (D1);
    \node at (-14,-10) {\textcolor{red}{$x_1y_1$}};

    \draw [black, thick] (C1) -- (D2);
    \node at (-10,-10) {\textcolor{red}{o.w.}};

    \draw [black, thick] (C2) -- (D4);
    \node at (-6.5,-10) {\textcolor{red}{$x_2y_1$}};

    \draw [black, thick] (C2) -- (D3);
    \node at (-3.9,-10) {\textcolor{red}{o.w.}};
    
    \draw [black, thick] (C4) -- (D5);
    \node at (4,-10) {\textcolor{red}{$bx_2$}};
    \draw [black, thick] (C4) -- (D6);
    \node at (6.5,-10) {\textcolor{red}{$by_1$}};
    \draw [black, thick] (C4) -- (D7);
    \node at (9.5,-10) {\textcolor{red}{o.w.}};

    \draw [black, thick] (D1) -- (E1);
    \node at (-14.6,-14) {\textcolor{red}{$bx_1$}};

    \draw [black, thick] (D1) -- (E2);
    \node at (-11,-14) {\textcolor{red}{o.w.}};

    \draw [black, thick] (D3) -- (E3);
    \node at (-7.3,-14) {\textcolor{red}{$cx_2$}};

    \draw [black, thick] (D3) -- (E4);
    \node at (-3.3,-14) {\textcolor{red}{o.w.}};

    \draw [black, thick] (D5) -- (E5);
    \node at (2.8,-14) {\textcolor{red}{$cx_2$}};

    \draw [black, thick] (D5) -- (E6);
    \node at (5.6,-14) {\textcolor{red}{o.w.}};

    \draw [black, thick] (E1) -- (F1);
    \node at (-14.6,-18) {\textcolor{red}{$ad$}};

    \draw [black, thick] (E1) -- (F2);
    \node at (-11,-18) {\textcolor{red}{o.w.}};

    \draw [black, thick] (E3) -- (F3);
    \node at (-6.7,-18) {\textcolor{red}{$dy_1$}};

    \draw [black, thick] (E3) -- (F4);
    \node at (-3,-18) {\textcolor{red}{o.w.}};

    \draw [black, thick] (E5) -- (F5);
    \node at (2.8,-18) {\textcolor{red}{$bd$}};

    \draw [black, thick] (E5) -- (F6);
    \node at (5.6,-18) {\textcolor{red}{o.w.}};
    
    \end{tikzpicture}
    \caption{The early strategy of $P_1$}
    \label{fig:KeyForK23}
 \end{figure}
 
In the following proof, we will frequently invoke Claim~\ref{claim:C4} or~\cref{claim:C4+} to demonstrate that $P_{1}$ can win the game within a finite number of moves. For convenience, we group some similar cases together for explanation.

\begin{figure}[htbp]
    \centering
\begin{subfigure}[b]{0.3\textwidth}
    \centering 
    \begin{tikzpicture}[scale=0.3]
        % Vertices  
        \node (A) at (0,0) {$a$};
        \node (X1) at (-4,-4) {$x_1$};
        \node (X2) at (0,-4) {$x_2$};
        \node (X3) at (4,-4) {$x_3$};
        \node (C) at (8,-4) {$c$};
        \node (Y1) at (8,0) {$y_1$};
        \node (B) at (6,-8) {$b$};
    \draw [black, thick] (A) -- (X1);
    \draw [black, thick] (A) -- (X2);
    \draw [black, thick] (A) -- (X3);
    \draw [black, dashed] (X2) -- (B) -- (X1);
    
    \draw [red, dashed] (X3) -- (B);
    \draw [red, thick] (C) -- (Y1);
    \end{tikzpicture}
    \caption{Graph:(1)}
    \label{fig:(1)}
\end{subfigure}
\begin{subfigure}[b]{0.3\textwidth}
    \centering 
    \begin{tikzpicture}[scale=0.3]
        % Vertices  
        \node (A) at (0,0) {$a$};
        \node (X1) at (-4,-4) {$x_1$};
        \node (X2) at (0,-4) {$x_2$};
        \node (X3) at (4,-4) {$x_3$};
        \node (Y1) at (6,0) {$y_1$};
        \node (B) at (6,-8) {$b$};
    \draw [black, thick] (A) -- (X1);
    \draw [black, thick] (A) -- (X2);
    \draw [black, thick] (A) -- (X3);
    \draw [black, dashed] (X2) -- (B) -- (X1);
    
    \draw [red, dashed] (X3) -- (B);
    \draw [red, thick] (A) -- (Y1);   
    \end{tikzpicture}
    \caption{Graph:(2)}
    \label{fig:(2)}
\end{subfigure}
\begin{subfigure}[b]{0.3\textwidth}
    \centering 
    \begin{tikzpicture}[scale=0.3]
        % Vertices  
        \node (A) at (0,0) {$a$};
        \node (B) at (6,-8) {$b$};
        \node (X1) at (-4,-4) {$x_1$};
        \node (X2) at (0,-4) {$x_2$};
        \node (X3) at (4,-4) {$x_3$};
        \node (Y1) at (6,0) {$y_1$};
    \draw [black, thick] (A) -- (X1);
    \draw [black, thick] (A) -- (X2);
    \draw [black, thick] (A) -- (X3);
    \draw [black, thick] (B) -- (X1);
    \draw [black, dashed] (X2) -- (B);
    
    \draw [red, dashed] (X3) -- (B);
    \draw [red, thick] (A) -- (Y1); 
    \draw [red, thick] (X1) -- (Y1);
    \draw [red, thick] (B) -- (Y1);
    \end{tikzpicture}
    \caption{Graph:(5)}
    \label{fig:(5)}
\end{subfigure}
\caption{The graphs (1) (2) and (5)}
\label{fig:case1}
\end{figure}
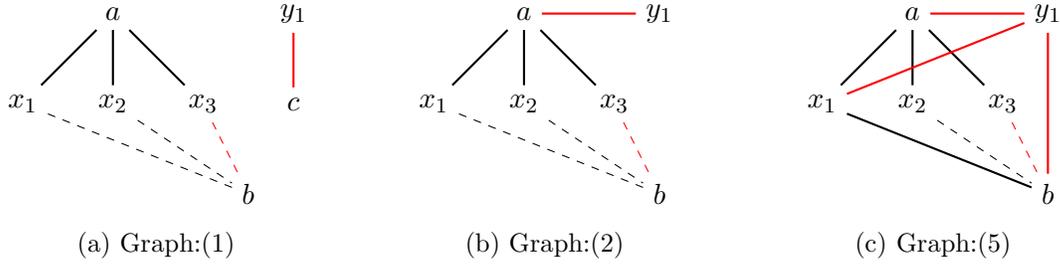 

\noindent \textbf{Case 1.} Proofs of (1), (2) and (5).
\begin{proof}
    For graphs (1) and (2), $P_1$ claims $bx_j$ for some $j\in [3]$ in round $4$, where $x_{j}$ has the highest degree in the graph claimed by $P_{2}$ and $b$ is an unused vertex. Then $P_1 $ claims $bx_j$ for some $j\in [3]$ which has not been claimed in round $5$. Without loss of generality, assume that $e^{(1)}(4)=bx_{1}$ and $e^{(1)}(5)=bx_{2}$. 
For graph (5), $P_1$ claims $bx_2$ or $bx_3$ which has not been claimed in round $5$.

We claim that no matter how $P_2$ claims before round 5, we can apply \cref{claim:C4+} ($x_1$ or $x_2$).
Let $H$ be the graph obtained by $P_2$ after the round $5$.
It is sufficient to show that: $P_1$ is not in check before round $6$ and neither $x_1$ nor $x_2$ is contained in $C_4$ in
graph $H$.
First, suppose that both $x_1$ and $x_2$ are vertices belonging to some $C_4$ in $H$. Since $e(H)=5$, we know that $x_1,x_2$ must belong to the same $4$-cycle, say $C$.
\begin{itemize}
    \item For graph (1), since $bx_1,bx_2 \notin E(H)$, $C$ does not contain $b$, which implies that $V(C) = \{x_1,x_2,c,y_1\}$. We derive that $P_2$ claims one of $\{x_1x_2, x_1c,x_1y_1,x_2c,x_2y_2\}$ in round $2$, a contradiction.
    \item For graph (2), $C$ does not contain $a$ and $b$ as $ax_1,ax_2,bx_1bx_2$ have been claimed by $P_1$. Then $e(H) \ge |E(C) \cup\{ay_1,bx_3\}| \ge 6$, a contradiction. 
    \item For graph (5), clearly, there is no $C_4$ containing both $x_1$ and $x_2$.
\end{itemize}
Thus, one of $x_1,x_2$ does not belong to the $C_4$ in $H$.
Now, suppose that $P_1$ is in check, which implies that $H$ is isomorphic to $K_{2,2}(1)$.
\begin{itemize}
    \item For graphs $(1)$ and $(2)$, note that $x_3$ is unused before round 3 and $b$ is unused before round 4. $P_2$ must claim an edge containing either $x_3$ or $b$ from round $3$ to $5$, since the sum of the degree of $x_3,b$ in $H$ is at least $4$. If $H$ does not contain $x_1$ and $x_2$, then the degree of $x_1,x_2$ is zero in $H$, which contradicts the choice of $bx_3$. Without loss of generality, we assume that $H$ contains $x_2$, which implies that $P_2$ claims $x_2y_1$ (it could also be $x_2c$ for graph (1)) in round $2$, a contradiction.

    \item For graph $(5)$, since $x_3 a$ has been claimed by $P_1$, $P_1$ can not be in check.
\end{itemize}
\end{proof}

\begin{figure}[htbp]
    \centering
\begin{subfigure}[b]{0.3\textwidth}
    \centering 
    \begin{tikzpicture}[scale=0.3]
        % Vertices  
        \node (A) at (0,0) {$a$};
        \node (X1) at (4,4) {$x_1$};
        \node (X2) at (4,-4) {$x_2$};
        \node (C) at (8,4) {$c$};
        \node (Y1) at (8,0) {$y_1$};
    \draw [black, thick] (A) -- (X1) -- (Y1) -- (X2) -- (A);

    \draw [red, thick] (C) -- (Y1);
    \draw [red, thick] (C) -- (X1);
    \end{tikzpicture}
    \caption{Graph:(3)}
    \label{fig:(3)}
\end{subfigure}
\begin{subfigure}[b]{0.3\textwidth}
    \centering 
    \begin{tikzpicture}[scale=0.3]
        % Vertices  
        \node (A) at (0,0) {$a$};
        \node (X1) at (4,4) {$x_1$};
        \node (X2) at (4,-4) {$x_2$};
        \node (C) at (8,4) {$c$};
        \node (Y1) at (8,0) {$y_1$};
    \draw [black, thick] (A) -- (X1) -- (Y1) -- (X2) -- (A);

    \draw [red, thick] (C) -- (Y1);
    \draw [red, thick] (X2) -- (X1);
    \end{tikzpicture}
    \caption{Graph:(4)}
    \label{fig:(4)}
\end{subfigure}
\begin{subfigure}[b]{0.3\textwidth}
    \centering 
    \begin{tikzpicture}[scale=0.3]
        % Vertices  
        \node (A) at (0,0) {$a$};
        \node (X1) at (4,4) {$x_1$};
        \node (X2) at (4,-4) {$x_2$};
        \node (B) at (8,0) {$b$};
        \node (Y1) at (0,4) {$y_1$};
    \draw [black, thick] (A) -- (X1) -- (B) -- (X2) -- (A);

    \draw [red, thick] (A) -- (Y1);
    \draw [red, thick] (Y1) -- (X1);
    \end{tikzpicture}
    \caption{Graph:(6)}
    \label{fig:(5)}
\end{subfigure}
\caption{The graphs (3) (4) and (6)}
\label{fig:case1}
\end{figure}
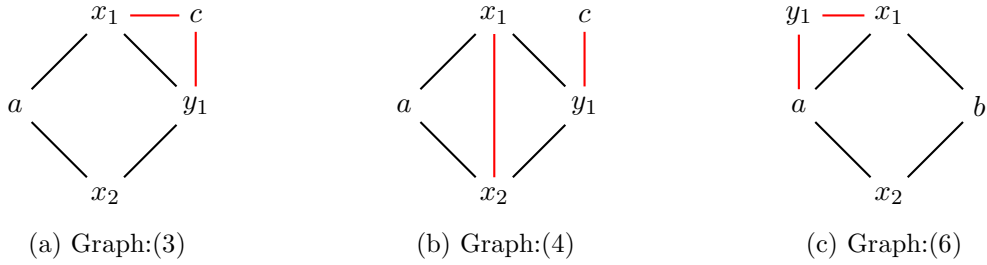

\noindent \textbf{Case 2.} Proofs of (3), (4) and (6).
\begin{proof}
    For graphs (3), (4) and (6), $P_2$ can claim two more edges. Clearly, no matter how $P_2$ claims these two edges, $P_2$ cannot obtain a $C_4$ containing $x_2$.
For graphs (3) and (4), unless $P_2$ claims $ca$ and $cx_2$, we can apply~\cref{claim:C4} ($x_2a$ or $x_2y_1$) to show that $P_1$ can win within 6 rounds.
Now we suppose that $P_2$ claims $ca$ and $cx_2$. 
\begin{itemize}

    \item For graph ($3$), $P_1$ will claim $ab$ for some unused vertex $b$ in the next round. Then no matter how $P_2$ claims the edge in the next round, $P_2$ is $C_4$-free. By \cref{claim:C4+}, we derive $P_1$ can win within 5 rounds. 
    \item For graph ($4$), we can apply~\cref{claim:C4} to show that $P_1$ can win within 5 rounds.
    \item For graph (6), since (3) and (6) are isomorphic, $P_1$ can also win within 6 rounds.
\end{itemize}
\end{proof}

\begin{figure}[htbp]
    \centering
\begin{subfigure}[b]{0.3\textwidth}
    \centering 
    \begin{tikzpicture}[scale=0.3]
        % Vertices  
        \node (A) at (0,0) {$a$};
        \node (X1) at (4,4) {$x_1$};
        \node (X2) at (4,-4) {$x_2$};
        \node (C) at (8,4) {$c$};
        \node (B) at (8,0) {$b$};
        \node (Y1) at (0,8) {$y_1$};
    \draw [black, thick] (A) -- (X1);
    \draw [black, thick] (B) -- (X2);
    \draw [black, thick] (B) -- (X1);
    \draw [black, thick](Y1) -- (X2) -- (A);

    \draw [red, thick] (C) -- (Y1);
    \draw [red, thick] (C) -- (X1);
    \draw [red, thick] (Y1) -- (X1);
    \end{tikzpicture}
    \caption{Graph:(7)}
    \label{fig:(3)}
\end{subfigure}
\begin{subfigure}[b]{0.3\textwidth}
    \centering 
    \begin{tikzpicture}[scale=0.3]
        % Vertices  
        \node (A) at (0,0) {$a$};
        \node (X1) at (4,-4) {$x_1$};
        \node (X2) at (4,4) {$x_2$};
        \node (Y1) at (8,4) {$y_1$};
        \node (C) at (8,0) {$c$};
    \draw [black, thick] (A) -- (X1) -- (C) -- (X2) -- (A);
    \draw [black, thick] (Y1) -- (X1);

    \draw [red, thick] (C) -- (Y1);
    \draw [red, thick] (X2) -- (X1);
    \draw [red, thick] (X2) -- (Y1);
    \end{tikzpicture}
    \caption{Graph:(8)}
    \label{fig:(8)}
\end{subfigure}
\begin{subfigure}[b]{0.3\textwidth}
    \centering 
    \begin{tikzpicture}[scale=0.3]
        % Vertices  
        \node (A) at (0,0) {$a$};
        \node (X1) at (4,4) {$x_1$};
        \node (X2) at (4,-4) {$x_2$};
        \node (B) at (8,4) {$b$};
        \node (C) at (8,0) {$c$};
        \node (Y1) at (0,4) {$y_1$};
    \draw [black, thick] (A) -- (X1) -- (C) -- (X2) -- (A);
    \draw [black, thick] (B) -- (X1); 

    \draw [red, thick] (A) -- (Y1);
    \draw [red, thick] (Y1) -- (X1);
    \draw [red, thick] (B) -- (X2);
    \end{tikzpicture}
    \caption{Graph:(9)}
    \label{fig:(9)}
\end{subfigure}
\caption{The graphs (7) (8) and (9)}
\label{fig:case1}
\end{figure}
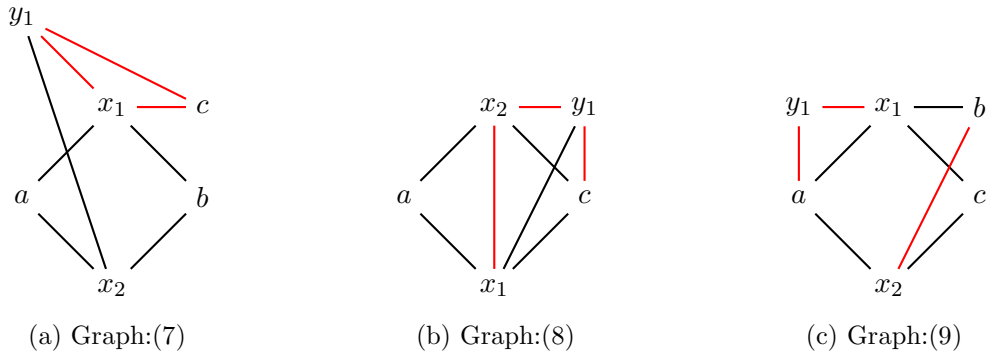

\noindent \textbf{Case 3.} Proofs of (7), (8) and (9).
\begin{proof}
  For graphs (7), (8) and (9), $P_2$ can claim two more edges. Clearly, no matter how $P_2$ claims those two edges, $P_2$ cannot obtain a $K_{2,2}(1)$, which implies that $P_1$ is not in check.
  \begin{itemize}
      \item For graphs (7) (8), no matter how $P_2$ claims those two edges, $P_2$ cannot obtain a $C_4$ containing vertex $a$.
      By \cref{claim:C4+}, $P_1$ can win within 6 rounds.
      \item For graph (9), no matter how $P_2$ claims those two edges, $P_2$ cannot obtain a $C_4$ containing $c$.
      By \cref{claim:C4+}, $P_1$ can win within 6 rounds.
  \end{itemize}

\end{proof}

\begin{figure}[htbp]
    \centering
\begin{subfigure}[b]{0.3\textwidth}
    \centering 
    \begin{tikzpicture}[scale=0.3]
        % Vertices  
        \node (A) at (0,0) {$a$};
        \node (X1) at (4,2) {$x_1$};
        \node (X2) at (4,-2) {$x_2$};
        \node (C) at (8,2) {$c$};
        \node (B) at (0,-4) {$b$};
        \node (Y1) at (8,-2) {$y_1$};
        \node (D) at (4,-6) {$d$};
    \draw [black, thick] (A) -- (X1);
    \draw [black, thick] (B) -- (X2);
    \draw [black, thick] (A) -- (X2);
    \draw [black, thick](Y1) -- (X2) -- (B)-- (D) -- (Y1);

    \draw [red, thick] (B) -- (X1);
    \draw [red, thick] (C) -- (Y1);
    \draw [red, thick] (C) -- (X1);
    \draw [red, thick] (Y1) -- (X1);
    \draw [red, thick] (A) -- (D);    
    \end{tikzpicture}
    \caption{Graph:(10)}
    \label{fig:(10)}
\end{subfigure}
\begin{subfigure}[b]{0.3\textwidth}
    \centering 
    \begin{tikzpicture}[scale=0.3]
        % Vertices  
        \node (A) at (0,0) {$a$};
        \node (X1) at (4,-2) {$x_1$};
        \node (X2) at (4,2) {$x_2$};
        \node (Y1) at (8,-2) {$y_1$};
        \node (C) at (0,-6) {$c$};
        \node (D) at (4,-6) {$d$};
    \draw [black, thick] (A) -- (X1) -- (C) -- (D) -- (A);
    \draw [black, thick] (Y1) -- (X1);
    \draw [black, thick] (A) -- (X2);

    \draw [red, thick] (C) -- (Y1);
    \draw [red, thick] (X2) -- (X1);
    \draw [red, thick] (X2) -- (Y1);
    \draw [red, thick] (D) -- (Y1);
    \draw [red, thick] (X2) -- (C);
    \end{tikzpicture}
    \caption{Graph:(12)}
    \label{fig:(12)}
\end{subfigure}
\begin{subfigure}[b]{0.3\textwidth}
    \centering 
    \begin{tikzpicture}[scale=0.3]
        % Vertices  
        \node (A) at (0,0) {$a$};
        \node (X1) at (4,4) {$x_1$};
        \node (X2) at (0,-4) {$x_2$};
        \node (B) at (8,4) {$b$};
        \node (C) at (8,0) {$c$};
        \node (Y1) at (0,4) {$y_1$};
        \node (D) at (4,-4) {$d$};
    \draw [black, thick] (A) -- (X1) -- (C) -- (D) -- (A);
    \draw [black, thick] (B) -- (X1); 
    \draw [black, thick] (A) -- (X2); 

    \draw [red, thick] (A) -- (Y1);
    \draw [red, thick] (Y1) -- (X1);
    \draw [red, thick] (B) -- (X2);
    \draw [red, thick] (C) -- (X2);
    \draw [red, thick] (B) -- (D);
    
    \end{tikzpicture}
    \caption{Graph:(14)}
    \label{fig:(14)}
\end{subfigure}
\caption{The graphs (10) (12) and (14)}
\label{fig:case1}
\end{figure}
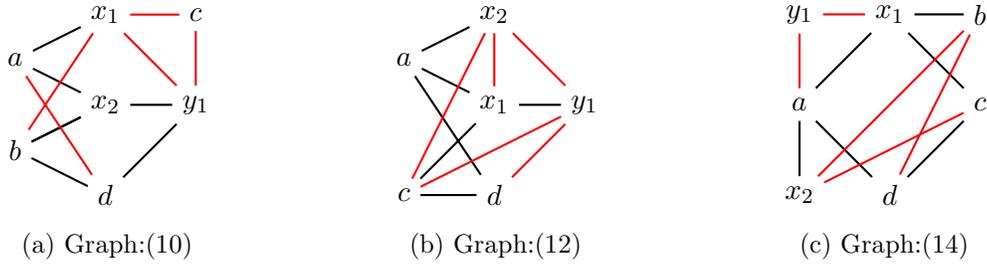

\noindent \textbf{Case 4.} Proofs of (10), (12) and (14).
\begin{proof}
    For graphs (10), (12) and (14), $P_2$ can claim one more edge.
    If $P_2$ is $C_4$-free, then by claim~\ref{claim:C4+}, $P_1$ can win within 5 rounds.
    Suppose that $P_2$ contains a $C_4$.
    \begin{itemize}
        \item  For graph (10), $P_2$ must claim $by_1$ or $bc$. Similar to the proof of \cref{claim:C4+}, $P_1$ could claim $x_2z_i$ for $i\in [3]$ consecutively, for three unused vertices $z_i$, then $P_2$ must claim $dz_i$ for $i\in [3]$. Thus, $P_1$ can obtain a $K_{2,3}$ by claiming two of the edges form $x_1y_i$ for $i\in [3]$.

        \item  For graph (12), $P_2$ must claim $dx_1$ or $dx_2$. By~ \cref{claim:C4+}, respectively, $P_1$ can win within 5 rounds.

        \item  For graph (14), it is impossible for $P_2$ to obtain a $C_4$ by adding just one more edge.
    \end{itemize}
\end{proof}
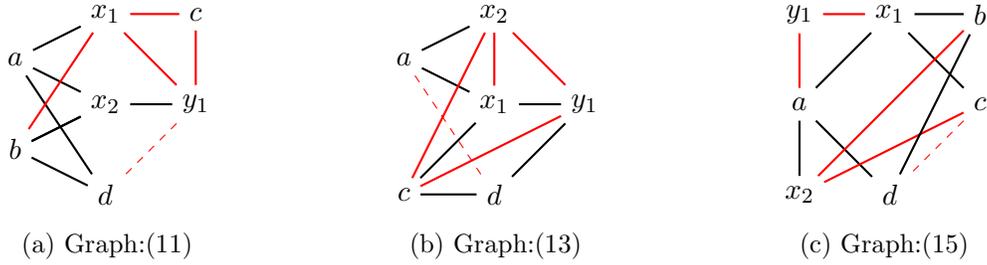
\begin{figure}[htbp]
    \centering
\begin{subfigure}[b]{0.3\textwidth}
    \centering 
    \begin{tikzpicture}[scale=0.3]
        % Vertices  
        \node (A) at (0,0) {$a$};
        \node (X1) at (4,2) {$x_1$};
        \node (X2) at (4,-2) {$x_2$};
        \node (C) at (8,2) {$c$};
        \node (B) at (0,-4) {$b$};
        \node (Y1) at (8,-2) {$y_1$};
        \node (D) at (4,-6) {$d$};
    \draw [black, thick] (A) -- (X1);
    \draw [black, thick] (B) -- (X2);
    \draw [black, thick] (A) -- (X2);
    \draw [black, thick](Y1) -- (X2) -- (B)-- (D);
    \draw [black, thick] (A) -- (D);  
    
    \draw [red, thick] (B) -- (X1);
    \draw [red, thick] (C) -- (Y1);
    \draw [red, thick] (C) -- (X1);
    \draw [red, thick] (Y1) -- (X1); 
    \draw [red, dashed] (Y1) -- (D);
    \end{tikzpicture}
    \caption{Graph:(11)}
    \label{fig:(10)}
\end{subfigure}
\begin{subfigure}[b]{0.3\textwidth}
    \centering 
    \begin{tikzpicture}[scale=0.3]
        % Vertices  
        \node (A) at (0,0) {$a$};
        \node (X1) at (4,-2) {$x_1$};
        \node (X2) at (4,2) {$x_2$};
        \node (Y1) at (8,-2) {$y_1$};
        \node (C) at (0,-6) {$c$};
        \node (D) at (4,-6) {$d$};
    \draw [black, thick] (A) -- (X1) -- (C) -- (D);
    \draw [black, thick] (Y1) -- (X1);
    \draw [black, thick] (A) -- (X2);

    \draw [red, thick] (C) -- (Y1);
    \draw [red, thick] (X2) -- (X1);
    \draw [red, thick] (X2) -- (Y1);
    \draw [black, thick] (D) -- (Y1);
    \draw [red, thick] (X2) -- (C);
    \draw [red, dashed] (A) -- (D);
    \end{tikzpicture}
    \caption{Graph:(13)}
    \label{fig:(13)}
\end{subfigure}
\begin{subfigure}[b]{0.3\textwidth}
    \centering 
    \begin{tikzpicture}[scale=0.3]
        % Vertices  
        \node (A) at (0,0) {$a$};
        \node (X1) at (4,4) {$x_1$};
        \node (X2) at (0,-4) {$x_2$};
        \node (B) at (8,4) {$b$};
        \node (C) at (8,0) {$c$};
        \node (Y1) at (0,4) {$y_1$};
        \node (D) at (4,-4) {$d$};
    \draw [black, thick] (D) --(A) -- (X1) -- (C);
    \draw [black, thick] (B) -- (X1); 
    \draw [black, thick] (A) -- (X2); 

    \draw [red, thick] (A) -- (Y1);
    \draw [red, thick] (Y1) -- (X1);
    \draw [red, thick] (B) -- (X2);
    \draw [red, thick] (C) -- (X2);
    \draw [black, thick] (B) -- (D);
    \draw [red, dashed] (C) -- (D);
    
    \end{tikzpicture}
    \caption{Graph:(15)}
    \label{fig:(15)}
\end{subfigure}
\caption{The graphs (11) (13) and (15)}
\label{fig:case1}
\end{figure}

\noindent \textbf{Case 5.} Proofs of (11), (13) and (15).
\begin{proof}
    For graphs (11), (13) and (15), $P_2$ can claim two more edges. Clearly, no matter how $P_2$ claims those two edges, $P_2$ cannot obtain a $K_{2,3}$. So $P_2$ must claim the dashed lines in the graphs.
    \begin{itemize}
    \item For graphs (13) and (15), it is impossible for $P_2$ to obtain a $C_4$ by adding just one more edge.
    By \cref{claim:C4+}, $P_1$ can win within 5 rounds.
    
    \item   For graph (11), it is impossible for $P_2$ to obtain a $C_4$ containing $x_2$ by adding just one more edge. By~\cref{claim:C4+}, $P_1$ can win within 5 rounds.
    \end{itemize}

\end{proof}

\section{Concluding remarks}
The main contribution of this paper is the discovery of a new family of graphs $K_{2,t+1}(t-2)$ and our demonstration that $P_{1}$ cannot win the game $R(K_{n}\sqcup K_{n},K_{2,t+1}(t-2))$ for $t\ge 3$. Moreover, the condition $t\ge 3$ is optimal, since $P_{1}$ can win the game when $t=2$. We suspect that these graphs could be promising candidates for answering Beck's major conjecture: there exists some graph $H$ such that player 1 cannot win the game $R(K_{n},H)$ within a bounded number of moves.

On the other hand, we are interested in identifying which graphs $H$ allow $P_{1}$ to win the game $R(K_{n},H)$. While we have already proven this for cycles, it would be interesting to explore additional graphs.

As mentioned earlier, the problem of significantly improving the upper bound \( L(K_n, K_5) \leq \left(\frac{3}{4} + o(1)\right) \binom{n}{2} \) remains open. More broadly, while it is conjectured that for any graph \( H \), \( L(K_n, H) \leq C_H \) for some constant \( C_H \), examples that substantially improve upon the trivial upper bound provided by the Turán number are quite rare. Hence, we believe the following problem is of independent interest.
\begin{problem}
    Find more graphs $H$ for which $L(K_{n},H)$ is sufficiently smaller than $ \textup{ex}(n,H)$. 
\end{problem}

For example, we say a graph $H$ is $2$-color-critical if there is some edge $e\in E(H)$ such that $H$ is not bipartite, but $H-e$ is bipartite. Next, we can establish an upper bound that is better than $(\frac{1}{4}+o(1))n^{2}$.

\begin{prop}\label{prop:bipartite_critical}
    For any given $2$-color-critical graph $H$, we have 
    \begin{equation*}
        L(K_{n},H)\le O(n^{2-\frac{1}{|H|}}).
    \end{equation*}
\end{prop}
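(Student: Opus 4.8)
The plan is to give $P_1$ an explicit strategy that wins within $O(n^{2-1/h})$ moves, $h:=|H|$. Fix the critical edge $e=u_1u_2$ and a proper $2$-colouring of $D:=H-e$ with colour classes $X\ni u_1,u_2$ and $Y$; set $a:=|X|$, $b:=|Y|$. Since $D\subseteq K_{a,b}\subseteq K_{h,h}$, the K\H{o}v\'{a}ri--S\'{o}s--Tur\'{a}n theorem gives $\mathrm{ex}(n,K_{h,h})=O(n^{2-1/h})$; this is the only source of the exponent. (We also record the easy consequence of $\chi(H)=3$ that $u_1,u_2$ each have a neighbour in $Y$, since a vertex of degree $\le1$ could be coloured last.) The reduction is: to win it suffices for $P_1$ to produce $a$ vertices that span an edge of $G_1$ and have $\ge b$ common neighbours in $G_1$, because mapping $X$ onto these vertices with $u_1,u_2$ on the edge and $Y$ injectively into $b$ common neighbours embeds $H$ (all needed cross-edges are present as $D\subseteq K_{a,b}$).

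Accordingly $P_1$ will build a dense ``host'': he reserves disjoint fresh sets $A$ with $|A|=\Theta(n^{1-1/h})$ and $B$ with $|B|=\Theta(n)$, and on each of his turns, unless $P_2$ is one move from completing a copy of $H$ (in which case $P_1$ takes the completing edge), he claims an edge inside $A$ or between $A$ and $B$, giving priority to the $A$-internal edges and otherwise filling $A\times B$ in a balanced round-robin over $A$. This uses $\binom{|A|}{2}+|A||B|=O(n^{2-2/h})+O(n^{2-1/h})=O(n^{2-1/h})$ non-defensive moves, and the choice of parameters is dictated by the following idea: $P_2$ plays only $O(n^{2-1/h})$ moves in total, but it costs $\Theta(|B|)=\Theta(n)$ of them just to strip one vertex of $A$ of its $B$-neighbourhood, or to keep the $B$-neighbourhoods of a single pair of vertices of $A$ essentially disjoint; hence $P_2$ can damage only $O(n^{2-1/h}/n)=O(|A|)$ vertices or pairs, so (using that the round-robin keeps the host spread) all but a vanishing fraction of the $a$-subsets of $A$ retain $\ge b$ common $G_1$-neighbours, and $P_1$ moreover holds $\ge\tfrac12\binom{|A|}{2}$ of the $A$-internal edges. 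A K\H{o}v\'{a}ri--S\'{o}s--Tur\'{a}n / counting argument then forces one of $P_1$'s $A$-internal edges to lie inside an undamaged $a$-subset, giving $G_1\supseteq H$.

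I expect the real work to be two things. First, the interaction with $P_2$: one must make precise that $P_1$'s balanced filling, played against an adversary spending his $O(n^{2-1/h})$ moves optimally, leaves $(1-o(1))$ of the $a$-subsets with large common neighbourhood \emph{and} leaves $P_1$ able to route one of his $A$-internal edges into such a subset (handling, in particular, the adversary's option of wiping out a positive fraction of $A$'s vertices together with their internal edges, which is what pins down the precise sizes of $A$ and $B$, hence the exponent $2-\tfrac{1}{|H|}$). Second, one must check that interrupting to parry $P_2$'s threats inflates $P_1$'s move count only by $O(n^{2-1/h})$; this holds because a fixed target graph gives rise to only that many forcing threats against a player who otherwise follows this construction. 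For small $a$ --- for instance $H=K_3$ or $H=K_4\setminus K_2$ --- a direct double-threat argument already yields a bound of $O(1)$, so such cases can be handled separately.
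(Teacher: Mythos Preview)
Your plan is far more involved than necessary, and the ``real work'' you identify --- especially ensuring that parrying $P_2$'s threats does not derail the construction --- is a genuine gap: in a strong game, blocking single immediate threats does not by itself prevent $P_2$ from engineering a double threat, and nothing in your outline rules this out. The paper bypasses all of the host-building, round-robin filling and damage-counting with a four-line argument. One lets $P_1$ follow any winning strategy (one exists by Ramsey's theorem plus strategy stealing, as already recorded in the introduction), augmented only by the rule ``take a winning move whenever one is available''. If the game is still running after $C_H\, n^{2-1/h}$ of $P_1$'s moves, then $P_1$'s graph already has that many edges and hence, by K\H{o}v\'{a}ri--S\'{o}s--Tur\'{a}n, contains a copy of $K_{h,h}$ with parts $L,R$. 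Now ``game not finished'' says two things at once: $P_1$ owns no edge inside $L$ (otherwise $P_1$ already holds $K_{h,h}+e\supseteq H$), and $P_2$ does not own all $\binom{h}{2}$ edges inside $L$ (otherwise $P_2$ holds $K_h\supseteq H$). So some edge inside $L$ is still unclaimed; $P_1$ takes it and wins on the very next move.

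The point you missed is that one need not build the $K_{h,h}$ deliberately: it appears for free in \emph{any} graph with enough edges, and the hypothesis ``game not finished'' is precisely what guarantees an available finishing edge inside one of its parts. Your approach tries to manufacture both the bipartite host and the extra internal edge by hand, which forces you into a delicate interference analysis that the paper's argument never needs to touch.
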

\begin{proof}[Proof of~\cref{prop:bipartite_critical}]
Suppose that the game is not finished after \(C_H n^{2-\frac{1}{|H|}}\) moves, where \(C_H > 0\) is a sufficiently large constant. By the famous K\H{o}v\'{a}ri–S\'{o}s–Tur\'{a}n theorem~\cite{1954KST}, \(P_1\) must have claimed a copy of \(K_{|H|,|H|} = L \cup R\). Since the game has not ended, there must be some unclaimed edges in both parts \(L\) and \(R\). At this point, \(P_1\) can claim an arbitrary unclaimed edge, say \(e\). Note that \(H\) is a subgraph of \(K_{|H|,|H|} + e\), ensuring that \(P_1\) wins.
    
\end{proof}

\section*{Acknowledgement}
This work was initiated when Jun Gao and Zixiang Xu visited Nankai University in September 2023. The authors gratefully acknowledge the warm hospitality and support extended by several faculty members at Nankai University during their visit. The authors are grateful to Henri Ortm\"{u}ller for his valuable comments and suggestions.

\bibliographystyle{abbrv}
\bibliography{SRGame}

\begin{thebibliography}{10}

\bibitem{1981Beck}
J.~Beck.
\newblock Van der {Waerden} and {Ramsey} type games.
\newblock {\em Combinatorica}, 1:103--116, 1981.

\bibitem{1985Beck}
J.~Beck.
\newblock Random graphs and positional games on the complete graph.
\newblock Random graphs '83, {Lect}. 1st {Semin}., {Pozna{\'n}}/{Pol}. 1983, {Ann}. {Discrete} {Math}. 28, 7-13 (1985)., 1985.

\bibitem{1996Beck}
J.~Beck.
\newblock Foundations of positional games.
\newblock {\em Random Struct. Algorithms}, 9(1-2):15--47, 1996.

\bibitem{2002DMBeck}
J.~Beck.
\newblock Ramsey games.
\newblock {\em Discrete Math.}, 249(1-3):3--30, 2002.

\bibitem{1982Beck2}
J.~Beck and L.~Csirmaz.
\newblock Variations on a game.
\newblock {\em J. Comb. Theory, Ser. A}, 33:297--315, 1982.

\bibitem{bowler2023k4game}
N.~Bowler and F.~Gut.
\newblock The $k^4$-game, 2023.

\bibitem{2009OnlineRamsey}
D.~Conlon.
\newblock On-line {Ramsey} numbers.
\newblock {\em SIAM J. Discrete Math.}, 23(4):1954--1963, 2009.

\bibitem{2020COnlonRamseyGame}
D.~Conlon, S.~Das, J.~Lee, and T.~M{\'e}sz{\'a}ros.
\newblock Ramsey games near the critical threshold.
\newblock {\em Random Struct. Algorithms}, 57(4):940--957, 2020.

\bibitem{david2020strong}
S.~David, I.~Hartarsky, and M.~Tiba.
\newblock Strong ramsey games in unbounded time.
\newblock {\em European Journal of Combinatorics}, 86:103096, 2020.

\bibitem{1973Erdos}
P.~Erd{\H{o}}s and J.~L. Selfridge.
\newblock On a combinatorial game.
\newblock {\em J. Comb. Theory, Ser. A}, 14:298--301, 1973.

\bibitem{2015RSA}
A.~Ferber, R.~Glebov, M.~Krivelevich, and A.~Naor.
\newblock Biased games on random boards.
\newblock {\em Random Struct. Algorithms}, 46(4):651--676, 2015.

\bibitem{2023Guo}
H.~Guo and L.~Warnke.
\newblock Bounds on {R}amsey games via alterations.
\newblock {\em J. Graph Theory}, 104(3):470--484, 2023.

\bibitem{1963HalesJ}
A.~W. Hales and R.~I. Jewett.
\newblock Regularity and positional games.
\newblock {\em Trans. Am. Math. Soc.}, 106:222--229, 1963.

\bibitem{2014Book}
D.~Hefetz, M.~Krivelevich, M.~Stojakovi{\'c}, and T.~Szab{\'o}.
\newblock {\em Positional games}, volume~44 of {\em Oberwolfach Semin.}
\newblock Basel: Birkh{\"a}user/Springer, 2014.

\bibitem{hefetz2017strong}
D.~Hefetz, C.~Kusch, L.~Narins, A.~Pokrovskiy, C.~Requil{\'e}, and A.~Sarid.
\newblock Strong ramsey games: Drawing on an infinite board.
\newblock {\em Journal of Combinatorial Theory, Series A}, 150:248--266, 2017.

\bibitem{1954KST}
T.~K{\"o}v{\'a}ri, V.~T. S{\'o}s, and P.~Tur{\'a}n.
\newblock On a problem of {K}. {Zarankiewicz}.
\newblock {\em Colloq. Math.}, 3:50--57, 1954.

\bibitem{2011JAMS}
M.~Krivelevich.
\newblock The critical bias for the {Hamiltonicity} game is {{\((1+o(1))n/\ln n\)}}.
\newblock {\em J. Am. Math. Soc.}, 24(1):125--131, 2011.

\bibitem{1930Ramsey}
F.~Ramsey.
\newblock On a problem of formal logic.
\newblock {\em Proceedings of the London Mathematical Society}, 2(1):264--286, 1930.

\bibitem{1941Turan}
P.~Tur\'{a}n.
\newblock Eine extremalaufgabe aus der graphentheorie.
\newblock {\em Fiz Lapok}, pages 436--452, 1941.

\end{thebibliography}

\end{document}